\documentclass[10pt, a4paper, twoside]{amsart}
\usepackage{amsthm}
\usepackage{amsmath}
\usepackage{amssymb}
\usepackage{amscd}
\usepackage{mathtools}
\usepackage[all]{xy}
\usepackage{indentfirst}
\usepackage{comment}
\usepackage{microtype}
\frenchspacing
\linespread{1.03}  
\usepackage{tikz}
\usepackage{tikz-cd}
\usetikzlibrary{patterns}
\usepackage{enumitem}

\usepackage{framed}
\usepackage{color}
\definecolor{shadecolor}{gray}{0.875}
\definecolor{col}{RGB}{42, 95, 151}
\usepackage[colorlinks=true, linkcolor=col, citecolor=col, filecolor = col, menucolor = col, urlcolor = col]{hyperref}

\newtheorem{thm}{\bf Theorem}[section]

\newtheorem{lem}[thm]{\bf Lemma}

\newtheorem{cor}[thm]{\bf Corollary}

\newtheorem*{thm*}{\bf Theorem}
\newtheorem*{cor*}{\bf Corollary}

\theoremstyle{definition}

\newtheorem{rem}[thm]{\it Remark}

\newtheorem*{df*}{\bf Definition}
\newtheorem*{not*}{\bf Notation}
\newtheorem*{dfs*}{\bf Definitions}
\newtheorem*{ack*}{\bf Acknowledgements}
\newtheorem*{dfrem*}{\bf Definition and Remark}

\def\P{\mathbb{P}}
\def\C{\mathbb{C}}
\def\F{\mathbb{F}}
\def\Q{\mathbb{Q}}

\def\Z{\mathbb{Z}}

\def\X{\mathcal{X}}
\def\Y{\mathcal{Y}}
\def\O{\mathcal{O}}

\DeclareMathOperator{\Proj}{Proj}

\DeclareMathOperator{\Ker}{Ker}

\DeclareMathOperator{\Alb}{Alb}

\DeclareMathOperator{\Coker}{Coker}

\DeclareMathOperator{\tors}{tors}

\DeclareMathOperator{\alg}{alg}
\DeclareMathOperator{\cl}{cl}

\DeclareMathOperator{\Top}{top}

\makeindex

\subjclass[2010]{14C25, 14C30, 14J28}
\keywords{Chow groups, Hodge classes, Abel-Jacobi maps, Enriques surfaces}
\title[A pencil of Enriques surfaces]
{A pencil of Enriques surfaces with non-algebraic integral Hodge classes}
\date{\today}
\author{John Christian Ottem}
\address{Department of Mathematics, University of Oslo, Box 1053, Blindern, 0316 Oslo, Norway}
\email{johnco@math.uio.no}
\author{Fumiaki Suzuki}
\address{Department of Mathematics, University of Illinois at Chicago, 851 S. Morgan Street, Chicago, IL, USA}
\email{fsuzuk2@uic.edu}

\begin{document}

\begin{abstract}
We prove that there exists a pencil of Enriques surfaces defined over $\Q$ with non-algebraic integral Hodge classes of non-torsion type.
This gives the first example of a threefold with the trivial Chow group of zero-cycles on which the integral Hodge conjecture fails.
As an application, we construct a fourfold which gives the negative answer to a classical question of Murre on the universality of the Abel-Jacobi maps in codimension three.
\end{abstract}
\maketitle

\section{Introduction}
For a smooth complex projective variety $X$, we denote by $CH^{p}(X)$ the Chow group of codimension $p$ cycles and by $H^{2p}(X,\Z)$ the Betti cohomology group of degree $2p$.
The image $H^{2p}_{\alg}(X,\Z)\subseteq H^{2p}(X,\Z)$ of the cycle class map $\cl^{p}\colon CH^{p}(X)\rightarrow H^{2p}(X,\Z)$ is contained in the group $Hdg^{2p}(X,\Z)\subseteq H^{2p}(X,\Z)$ of integral Hodge classes.
The integral Hodge conjecture is the statement that these two subgroups of $H^{2p}(X,\Z)$ coincide. 
While this statement holds for $p=0, 1$ and $\dim X$, it is known that it can fail in general. 
The first counterexample was given by Atiyah-Hirzebruch \cite{AH}, who  constructed a projective manifold admitting a non-algebraic degree four torsion class. 
Later, a different type of counterexample was constructed by Koll\'ar \cite[p. 134, Lemma]{BCC}, who proved that for certain high degree hypersurfaces $X\subset \P^4$, the generator of $H^4(X,\Z)=\Z$ is not algebraic.
This means that the natural inclusion
\[
H^{4}_{\alg}(X,\Z)/\tors \subset Hdg^{4}(X,\Z)/\tors
\]
can be strict.
Since then, many other examples of non-algebraic integral Hodge classes have been found, both of torsion type \cite{SV,BO} and of non-torsion type \cite{CTV,T,D}.

In this paper, we study Enriques surface fibrations over curves and show that they can admit non-algebraic integral Hodge classes of non-torsion type.
\begin{thm}[=Theorem \ref{t'}]\label{t}
There exists a pencil of Enriques surfaces defined over $\Q$ such that the cohomology groups $H^{i}(X,\Z)$ are torsion-free for all $i$ and the  inclusion
\[
H^{4}_{\alg}(X,\Z) \subsetneq Hdg^{4}(X,\Z)
\]is strict.
\end{thm}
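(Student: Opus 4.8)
The plan is to realize $X$ as a smooth model of a suitable pencil of Enriques surfaces defined over $\Q$, and then to split the problem into a topological part (torsion-freeness of $H^{*}(X,\Z)$ together with the construction of a candidate Hodge class) and a geometric part (proving that this class is not algebraic). First I would fix an explicit one-parameter family $f\colon X\to\P^1$ whose general fibre is a smooth Enriques surface, for instance by exploiting the canonical K3 double cover: one produces a family $\tilde f\colon \tilde X\to\P^1$ of K3 surfaces carrying a fibrewise fixed-point-free involution $\iota$ defined over $\Q$, and sets $X=\tilde X/\iota$, so that $\pi\colon\tilde X\to X$ is an \'etale double cover. After a careful analysis of the degenerate fibres one replaces $X$ by a smooth projective model, still fibred over $\P^1$ and still defined over $\Q$.

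The first technical step is to compute $H^{*}(X,\Z)$ and show it is torsion-free, for which I would run the Leray spectral sequence for $f$. The only sources of $2$-torsion are the summand $\Z/2$ generated by $K_{S_b}$ inside $R^2f_*\Z$ and the sheaf $R^3f_*\Z$, whose generic stalk is $H^3(S_b,\Z)\cong\Z/2$; since the fibres satisfy $p_g=q=0$ one has $R^1f_*\Z=0$. The delicate point is to arrange the degenerations (equivalently, the monodromy of the family) so that $H^1(\P^1,R^3f_*\Z)$ vanishes and the torsion in $H^2(\P^1,R^2f_*\Z)$ is killed by the spectral sequence; combined with Poincar\'e duality and the universal coefficient theorem this forces $H^{i}(X,\Z)$ to be torsion-free in every degree.

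Next I would exhibit the candidate Hodge class and reduce its non-algebraicity to a vanishing statement. Since an Enriques surface satisfies Bloch's conjecture, each fibre has $CH_0(S_b)=\Z$; as $f$ is fibred over a rational base this propagates to $CH_0(X)=\Z$. With $CH_0(X)$ trivial the Colliot-Th\'el\`ene--Voisin theory applies, so the defect group $Hdg^{4}(X,\Z)/H^{4}_{\alg}(X,\Z)$ is identified with the unramified cohomology $H^{3}_{\mathrm{nr}}(X,\Q/\Z(2))$, and in particular it is a finite stable birational invariant. The candidate class $\alpha$ is produced from the \'etale double cover: because $\pi_*\pi^{*}=2$, the pushforward of a natural algebraic class on $\tilde X$ yields a class $2\alpha$ that is algebraic on $X$, while $\alpha$ itself is a genuine integral Hodge class. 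Thus it suffices to prove that $\alpha$ is not algebraic, i.e. that its image in $H^{3}_{\mathrm{nr}}(X,\Q/\Z(2))$ is non-zero.

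This last point is the main obstacle. Proving non-algebraicity means ruling out every algebraic $1$-cycle representing $\alpha$, and curves on a threefold are far too flexible to exclude by a naive argument, so the global $2$-torsion carried by the canonical classes $K_{S_b}$ and by the cover $\pi$ must be used in an essential way. Concretely, I would detect the non-trivial unramified class either by a direct computation on the Gersten--Bloch-Ogus complex governed by the $2$-torsion structure of the Enriques fibres, or---exploiting that $X$ is defined over $\Q$---by an arithmetic specialization: spreading $X$ out over $\Z$ and reducing modulo a well-chosen prime, one shows that an algebraic representative of $\alpha$ would force a curve of an impossible numerical type (with odd intersection against the fibre class) on the reduction, which the explicit geometry of the special Enriques fibration forbids. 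Establishing this obstruction, and checking its compatibility with the identification $Hdg^{4}(X,\Z)/H^{4}_{\alg}(X,\Z)\cong H^{3}_{\mathrm{nr}}(X,\Q/\Z(2))$, is where the real work lies.
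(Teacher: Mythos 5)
Your outline has the right global shape --- an explicit family over $\Q$, the K3 double cover, $CH_0(X)=\Z$, and a specialization mod a well-chosen prime producing a parity obstruction against a class of odd degree over $\P^1$ --- and this is indeed the strategy of the paper. But the two load-bearing steps are left as declarations of intent rather than arguments, and that is precisely where the content of the theorem lies. For the non-algebraicity, "the explicit geometry of the special Enriques fibration forbids it" is not yet a mechanism. In the paper the family is the rank one degeneracy locus of a $2\times 3$ matrix $M$ on $\P^1\times\P^2\times\P^2$ whose second row is of the form $S\cdot(\ )+p\cdot(\ )$; reducing mod $p$ makes the degeneracy locus break into two components, and on the Enriques component one has an explicit linear equivalence $D_1=D_2+\sum_{j=1}^{12}E_{1,j}$ between a divisor $D_1$ of tridegree $(1,2,0)$, a divisor $D_2$ of tridegree $(0,0,2)$, and the twelve planes $E_{1,j}$. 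Intersecting with any $1$-cycle and reducing mod $2$ gives $\deg(\alpha_0/\P^1)\equiv\alpha_0\cdot\sum_j E_{1,j}\pmod 2$, which lifts to characteristic zero by Fulton's specialization homomorphism. Note that the obstruction pairs the degree over $\P^1$ against the intersection with the planes $E_{1,j}$, so you must also exhibit a topological class with odd degree over $\P^1$ \emph{and} zero intersection with $\sum_j E_{1,j}$; the paper gets this by pushing forward a class from the K3 cover $\mathcal{Y}$ pulled back from its minimal model $\mathcal{Y}_{\min}$ (a complete intersection of three $(1,2)$-divisors in $\P^1\times\P^5$), where the Lefschetz theorem gives surjectivity onto $H_2(\P^1,\Z)$. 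None of this is forced by your setup, and the detour through $H^3_{\mathrm{nr}}(X,\Q/\Z(2))$ is unnecessary: since $H^2(X,\mathcal{O}_X)=0$ the Hodge structure on $H^4$ is trivial, so one only needs a topological class violating the congruence.

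The torsion-freeness argument has the same problem: "arrange the degenerations so that $H^1(\P^1,R^3f_*\Z)$ vanishes and the torsion in $H^2(\P^1,R^2f_*\Z)$ is killed" is a wish, not a proof, and controlling the differentials and extensions in the Leray spectral sequence with $\Z$-coefficients for an Enriques fibration with degenerate fibres is genuinely hard. The paper instead proves $\pi_1(\mathcal{X})=0$ by a van Kampen argument on the singular quotient $\mathcal{Y}_{\min}/\iota$ and controls the torsion of $H^3$ via the Cartan--Leray spectral sequence for the $\Z/2$-cover, using Lefschetz on $\mathcal{Y}_{\min}$. Finally, a structural inaccuracy: a fibrewise fixed-point-free involution on a pencil of K3 surfaces need not be fixed-point free on the total space, and in the relevant examples it is not --- the double cover $\mathcal{Y}\to\mathcal{X}$ is ramified along $24$ planes lying over the $E_{i,j}$. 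Your candidate class built from "$\pi_*\pi^*=2$" for an \'etale cover therefore does not exist as described, and these ramification planes are exactly the cycles entering the parity obstruction, so they cannot be wished away.
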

One can compare Theorem \ref{t} with the result of Benoist--Ottem \cite{BO}, which showed that the integral Hodge conjecture can fail on products $S\times C$ for an Enriques surface $S$ and curve $C$ of genus at least one. 
In those examples, the non-algebraic classes in question are 2-torsion, but the integral Hodge classes are algebraic modulo torsion classes by the K\"unneth formula.

Theorem \ref{t} also relates to certain questions concerning rational points of algebraic varieties.
In a letter to Grothendieck, Serre asked whether a projective variety over the function field of a curve always has a rational point 
if it is  $\mathcal{O}$-acyclic, that is, $H^{i}(X,\mathcal{O}_{X})=0$ for all $i>0$.
This question was answered negatively by Grabber--Harris--Mazur--Starr \cite{GHMS}, who constructed an Enriques surface without rational points over the function field of a complex curve.
Later, more explicit constructions of such Enriques surfaces were given by Lafon \cite{La} and Starr \cite{St}.

According to \cite{St}, Esnault expected that the Enriques surfaces of \cite{GHMS} and \cite{La} would 
satisfy a stronger property that every closed point has even degree over the base field. 
If that were the case, it would give a pencil of Enriques surfaces with non-algebraic integral Hodge classes of non-torsion type (this follows from \cite[Theorem 7.6]{CTV}).
In fact, this observation was the starting point of the present paper.

Another feature of our example is that it has a trivial Chow group of zero-cycles. Indeed, Bloch--Kas--Lieberman \cite{BKL} proved that 
$CH_0(S)=\Z$ for any Enriques surface $S$, and from this one deduces that the same holds for any pencil of Enriques surfaces (see Lemma \ref{l2}).
To our knowledge, this is the first example of a threefold with the trivial Chow group of zero-cycles on which the integral Hodge conjecture fails
(see \cite[Subsection 5.7]{CTV} for a threefold constructed by Colliot-Th\'el\`ene and Voisin which conjecturally satisfies this condition).
We emphasize that it is not a priori obvious that such a threefold should exist.
For instance, typical examples with the trivial Chow groups of zero-cycles are given by rationally connected varieties
while the integral Hodge conjecture holds on rationally connected threefolds by a result of Voisin \cite{V1}.

 As an application, we settle a classical question on the universality of the Abel-Jacobi maps.
We denote by $A^{p}(X)\subset CH^{p}(X)$ the subgroup of cycle classes algebraically equivalent to zero.
The Abel-Jacobi map
\[
\psi^{p}\colon A^{p}(X)\rightarrow J^{p}_{a}(X),
\]
where $J^{p}_{a}(X)$ is the Lieberman intermediate Jacobian \cite{Lie}, is regular:
it defines an invariant on $A^{p}(X)$ with values in an abelian variety such that for any algebraic family of codimension $p$ cycles on $X$,
the function mapping each point of the base of the family to the value of the invariant of the corresponding codimension $p$ cycle is algebraic (see Section $4$ for a more precise definition of regular homomorphisms).
A classical question of Murre \cite[Section 7]{M3}\cite[p. 132]{GMV} asks whether the Abel-Jacobi map is universal among all regular homomorphisms
(see \cite{V2} for another universality question from a different perspective).
This is known to be true for $p=1,2$ and $\dim X$ \cite{M2}.
Combined with \cite[Theorem 1.3]{S}, Theorem \ref{t} implies that the question has a negative answer in the first open case $p=3$.

\begin{cor}[=Corollary \ref{c'}]\label{c}
Let $X$ be the pencil of Enriques surfaces of Theorem \ref{t}.
Then there exists an elliptic curve $E$ such that the Abel-Jacobi map
\[
\psi^{3}\colon A^{3}(X\times E)\rightarrow J_{a}^{3}(X\times E)
\]
is not universal: it factors through a universal regular homomorphism and the projection is an isogeny with non-zero kernel.
\end{cor}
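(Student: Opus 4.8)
The plan is to derive the corollary by feeding the threefold $X$ of Theorem \ref{t} into \cite[Theorem 1.3]{S}, which, for a product $X\times E$ with $E$ an elliptic curve, provides a precise comparison between the universal regular homomorphism (the algebraic representative) $\Phi^{3}\colon A^{3}(X\times E)\to Ab^{3}(X\times E)$ and the Abel-Jacobi map $\psi^{3}$. By the universal property one has a factorization $\psi^{3}=g\circ\Phi^{3}$ with $g\colon Ab^{3}(X\times E)\to J^{3}_{a}(X\times E)$ the induced projection, and what \cite[Theorem 1.3]{S} supplies is an identification of $\ker g$ with a group built out of the defect $Z^{4}(X):=Hdg^{4}(X,\Z)/H^{4}_{\alg}(X,\Z)$ of the integral Hodge conjecture in degree four. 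Thus the whole problem reduces to verifying the hypotheses of that theorem for our $X$ and checking that the resulting kernel is finite and non-zero.

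The geometric source of the discrepancy is the K\"unneth summand $H^{4}(X,\Z)\otimes H^{1}(E,\Z)$ of $H^{5}(X\times E,\Z)$. Since $X$ is fibred in Enriques surfaces one has $h^{3,1}(X)=h^{3,0}(X)=h^{1,0}(X)=0$, so $H^{4}(X)$ is purely of type $(2,2)$ and the coniveau-one part of $H^{5}(X\times E)$ is exactly $\bigl(H^{4}(X)\otimes H^{1}(E)\bigr)\oplus\bigl(H^{3}(X)\otimes H^{2}(E)\bigr)$. The Lieberman Jacobian $J^{3}_{a}(X\times E)$ is modelled on the full Hodge lattice $Hdg^{4}(X,\Z)\otimes H^{1}(E,\Z)$, whereas $Ab^{3}(X\times E)$, being generated by the Abel-Jacobi invariants of the algebraically trivial families $C\times\{e\}-C\times\{e_{0}\}$ attached to curves $C\subset X$, is modelled on the finer lattice $H^{4}_{\alg}(X,\Z)\otimes H^{1}(E,\Z)$. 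On the remaining summand $H^{3}(X)\otimes H^{2}(E)\cong J^{2}_{a}(X)$ no discrepancy arises, since Murre's theorem \cite{M2} gives universality in codimension two. Hence $g$ is the natural isogeny induced by $H^{4}_{\alg}(X,\Z)\hookrightarrow Hdg^{4}(X,\Z)$, and tensoring the sequence $0\to H^{4}_{\alg}(X,\Z)\to Hdg^{4}(X,\Z)\to Z^{4}(X)\to 0$ with the free module $H^{1}(E,\Z)$ yields $\ker g\cong Z^{4}(X)\otimes H^{1}(E,\Z)$.

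To conclude I would argue as follows. Theorem \ref{t} gives $H^{4}_{\alg}(X,\Z)\subsetneq Hdg^{4}(X,\Z)$, so $Z^{4}(X)\neq 0$; moreover $Z^{4}(X)$ is finite because the rational Hodge conjecture holds for $X$ in degree four, the non-algebraic class becoming algebraic after multiplication, reflecting the $2$-torsion geometry of Enriques surfaces. Finiteness of $Z^{4}(X)$ forces $H^{4}_{\alg}(X,\Z)\otimes\Q=Hdg^{4}(X,\Z)\otimes\Q$, so the two lattices above span the same $\Q$-vector space and $g$ is genuinely an isogeny rather than a map with positive-dimensional kernel. Choosing $E$ general, concretely with $\Hom(E,J^{2}_{a}(X))=0$, guarantees that the coniveau-one part of $H^{5}(X\times E)$ contains no summands beyond the two K\"unneth pieces above, so that \cite[Theorem 1.3]{S} applies and gives $\ker g\cong Z^{4}(X)\otimes H^{1}(E,\Z)\neq 0$. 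This is precisely the assertion that $\psi^{3}$ is not universal and that the projection $g$ is an isogeny with non-zero kernel.

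The main obstacle is the verification of the hypotheses of \cite[Theorem 1.3]{S} for this particular $X$, and it is here that the two features recorded in Theorem \ref{t} are both indispensable. Torsion-freeness of $H^{*}(X,\Z)$ is needed so that the lattice comparison on the K\"unneth summand is not polluted by torsion and so that the non-torsion type genuinely produces a non-zero contribution to $Z^{4}(X)$; finiteness of $Z^{4}(X)$, equivalently the degree-four rational Hodge conjecture for $X$, is what forces $g$ to be an isogeny. The most delicate point is therefore to confirm that the defect contributed by the non-algebraic class survives the passage to $X\times E$ as a non-zero finite subgroup of $Ab^{3}(X\times E)$, rather than being annihilated or becoming positive-dimensional; once this is in place the corollary follows by combining \cite[Theorem 1.3]{S} with Theorem \ref{t}.
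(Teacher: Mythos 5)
Your overall strategy is the same as the paper's: feed $\mathcal{X}$ into \cite[Theorem 1.3]{S} (recalled as Theorem \ref{tS}) and translate the resulting non-primitivity of $N^{2}H^{5}(\mathcal{X}\times E,\Z(3))$ into non-universality via Walker's factorization of $\psi^{3}$ through $J(N^{2}H^{5}(\mathcal{X}\times E,\Z(3)))$, the Walker map being the universal regular homomorphism by \cite[Theorem 1.1]{S}. The long K\"unneth discussion you give is a heuristic for why the cited theorem holds rather than a step of the deduction, and can be discarded.

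There is, however, one genuine gap. Theorem \ref{tS} has two hypotheses: (i) the inclusion $H^{4}_{\alg}(W,\Z)/\tors\subsetneq Hdg^{4}(W,\Z)/\tors$ is strict, and (ii) $CH_{0}(W)$ is supported on a surface. You verify (i), correctly noting that the torsion-freeness of $H^{4}(\mathcal{X},\Z)$ from Theorem \ref{t'} is what upgrades the strict inclusion of Theorem \ref{t} to a strict inclusion modulo torsion. But (ii) never appears in your argument. In the paper this is Lemma \ref{l2} ($CH_{0}(\mathcal{X})=\Z$), whose proof requires the Bloch--Kas--Lieberman theorem \cite{BKL} for the Enriques fibers together with a moving argument, followed by decomposition of the diagonal and Roitman's theorem; it is precisely this condition that controls the coniveau of $H^{5}(\mathcal{X}\times E)$ in \cite{S} and makes $\pi^{3}$ an isogeny. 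The conditions you single out as ``indispensable'' are instead torsion-freeness (genuinely needed, as above) and finiteness of $Z^{4}(\mathcal{X})$; the latter is automatic for every smooth projective threefold by hard Lefschetz and the Lefschetz $(1,1)$ theorem, so it is not a hypothesis that requires work. Two smaller points: the existence of a universal regular homomorphism $\Phi^{3}$ in codimension three is itself a theorem (it is the Walker map, universal by \cite[Theorem 1.1]{S}), not a formal consequence of a universal property; and your prescription for $E$ (that $\Hom(E,J^{2}_{a}(X))=0$ suffices) is not what the cited theorem asserts --- the existence of a suitable $E$ is part of its conclusion. Neither of these is fatal if the theorem is used as a black box, but the missing $CH_{0}$ hypothesis must be supplied.
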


This paper is organized as follows.
In Section $2$, we study the geometry of the pencils of Enriques surfaces appearing in Theorem \ref{t}. These are defined as the rank one degeneracy loci of maps of vector bundles on $\P^{1}\times \P^{2}\times \P^{2}$. 
In particular, we compute their integral cohomology groups and Chow groups of zero-cycles. In Section $3$, we prove the main theorem, using a specialization argument. 
In Section $4$, we apply the main theorem to Murre's question on the universality of the Abel-Jacobi maps.

We work over the complex numbers throughout.

\begin{ack*}
The authors would like to thank Olivier Benoist, J\o rgen Vold Rennemo, Jason Starr, and Claire Voisin for interesting discussions. 
The second author wishes to thank his advisor Lawrence Ein for constant support and warm encouragement.
JCO was supported by the Research Council of Norway project no. 250104. 
FS was supported by the NSF Grant No. DMS-1801870.
This project started while the authors were in residence 
at the Mathematical Sciences Research Institute in Berkeley, California, during the Spring 2019 semester.
\end{ack*}
 
\section{Geometry of pencils of Enriques surfaces}

In this paper, a {\em pencil of Enriques surfaces} will mean a smooth complex threefold $\X$ with a fibration $\X\to \P^{1}$ over $\P^{1}$ whose general fibers are Enriques surfaces. In the course of the proof of Theorem \ref{t}, we will give a few explicit constructions of such threefolds. We start with the construction of the Enriques surfaces themselves. 

We will fix the following notation\footnote{We use Grothendieck's notation for projective bundles: for a vector bundle $\mathcal E$, $\P(\mathcal E)$ paramterizes one-dimensional quotients of $\mathcal E$.}:
\begin{itemize}[leftmargin=*, label={-}]
\item $\P_{A}=\P_{\P^{2}\times \P^{2}}(\mathcal{O}(2,0)\oplus \mathcal{O}(0,2))$, $E_{1}=\P_{\P^{2}\times \P^{2}}(\mathcal{O}(2,0))$, $E_{2}=\P_{\P^{2}\times \P^{2}}(\mathcal{O}(0,2))$
\item $\P_{B}=\P_{\P^{2}\times \P^{2}}(\mathcal{O}(1,0)\oplus \mathcal{O}(0,1))$, $F_{1}=\P_{\P^{2}\times \P^{2}}(\mathcal{O}(1,0))$, $F_{2}=\P_{\P^{2}\times \P^{2}}(\mathcal{O}(0,1))$
\item $\P_{C}=\P(H^{0}(\P_{B},\mathcal{O}(1)))$, $P_{1}=\P(H^{0}(\P^{2}\times \P^{2},\mathcal{O}(1,0)))$, $P_{2}=\P(H^{0}(\P^{2}\times \P^{2},\mathcal{O}(0,1)))$.
\end{itemize}
These spaces are related as follows. We can regard $P_{1}$ and $P_{2}$ as disjoint planes in the five-dimensional projective space $\P_{C}$
 via the idetification $$H^{0}(\P_{B},\mathcal{O}(1))=H^{0}(\P^{2}\times \P^{2},\mathcal{O}(1,0))\oplus H^{0}(\P^{2}\times \P^{2},\mathcal{O}(0,1)).$$ Then the projective bundle $\P_{B}$ is identified with the blow-up of $\P_{C}$ along the union of $P_{1}$ and $P_{2}$ with the exceptional divisors $F_{1}$ and $F_{2}$. Moreover, there is a natural involution $\iota$ on $\P_{C}$ induced by the involution on $H^{0}(\P_{B},\mathcal{O}(1))$ 
with the ($\pm 1$)-eigenspaces $H^{0}(\P^{2}\times \P^{2},\mathcal{O}(1,0))$ and $H^{0}(\P^{2}\times \P^{2}, \mathcal{O}(0,1))$, respectively.
The involution $\iota$ lifts to an involution on $\P_{B}$, and we have $\P_{A}=\P_{B}/\iota$.  Thus there is a double cover $\P_{B}\rightarrow \P_{A}$ over $\P^{2}\times \P^{2}$, which is ramified along $F_{i}$, and  the divisors $F_{i}$ are mapped isomorphically onto $E_{i}$ for $i=1,2$. 

The projective models of the Enriques surfaces are defined as follows. 
On $\P^{2}\times \P^{2}$, we consider a map of vector bundles
\[
u\colon \mathcal{O}^{\oplus 3}\rightarrow \mathcal{O}(2,0)\oplus \mathcal{O}(0,2).
\]
Let $X$ be the rank one degeneracy locus of $u$.

\begin{lem}\label{lES}
If $u$ is general, then $X$ is an Enriques surface.
\end{lem}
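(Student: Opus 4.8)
The plan is to identify $X$ with the quotient of a K3 surface by a fixed-point-free involution, exploiting the double cover $f\colon\P_B\to\P_A$ constructed above. Write $u$ as a $2\times 3$ matrix with first row $(a_1,a_2,a_3)$, $a_j\in H^0(\mathcal{O}(2,0))$, and second row $(b_1,b_2,b_3)$, $b_j\in H^0(\mathcal{O}(0,2))$, so that $X$ is the vanishing locus of the three $2\times 2$ minors $a_ib_j-a_jb_i$. The columns $u_j=(a_j,b_j)$ are three sections of $\mathcal{E}_A:=\mathcal{O}(2,0)\oplus\mathcal{O}(0,2)=\pi_{A*}\mathcal{O}_{\P_A}(1)$, where $\pi_A\colon\P_A\to\P^2\times\P^2$ is the projection, hence three sections $\tilde u_1,\tilde u_2,\tilde u_3$ of $\xi:=\mathcal{O}_{\P_A}(1)$. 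First I would check that the common zero locus $Z:=\{\tilde u_1=\tilde u_2=\tilde u_3=0\}\subset\P_A$ is carried isomorphically onto $X$ by $\pi_A$. Over a point $x$, the sections restrict to three linear forms on the fibre $\P(\mathcal{E}_{A,x})\cong\P^1$; their common zero is a single reduced point precisely when the $u_j(x)$ span a line (rank one), is empty when they span the plane (rank two), and is all of $\P^1$ only when every $u_j(x)=0$ (rank zero). Since three general conics $a_1,a_2,a_3$ (resp. $b_1,b_2,b_3$) have no common zero in $\P^2$, the rank-zero locus is empty, so $\pi_A\colon Z\to X$ is bijective and $X\cong Z$.

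The key step is to pass to the double cover. Put $\tilde Z:=Z\times_{\P_A}\P_B\subset\P_B$. Writing $\eta:=\mathcal{O}_{\P_B}(1)$, the squaring description of $f$ gives $f^*\xi=2\eta$, so the pulled-back sections cut out $\tilde Z$ as a zero locus of three sections of $2\eta$. Identifying $\P_B$ with the blow-up of $\P_C=\P^5$ along $P_1\sqcup P_2$ and contracting by $|\eta|$, these three sections become the three $\iota$-invariant quadrics $Q_j=a_j+b_j$ on $\P^5$, and I would check that this identifies $\tilde Z$ with the complete intersection $\{Q_1=Q_2=Q_3=0\}\subset\P^5$. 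Since the $\iota$-invariant quadrics form a base-point-free linear system on $\P^5$, for general $u$ Bertini shows $\tilde Z$ is a smooth $(2,2,2)$ complete intersection; by adjunction $K_{\tilde Z}=\mathcal{O}_{\tilde Z}$, and by the Lefschetz hyperplane theorem $\tilde Z$ is simply connected with $q(\tilde Z)=0$, so $\tilde Z$ is a K3 surface. (Smoothness of $Z$, hence of $X$, then follows once $\tilde Z\to Z$ is seen to be \'etale.)

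Finally I would verify that $\iota$ acts freely on $\tilde Z$, equivalently that $Z$ avoids the branch locus $E_1\cup E_2$ of $f$. The fixed locus of $\iota$ on $\P^5$ is $P_1\sqcup P_2$, and $\tilde Z\cap P_1=\{a_1=a_2=a_3=0\}\subset\P^2$ is empty for general conics, as is $\tilde Z\cap P_2$. Hence $\iota|_{\tilde Z}$ is fixed-point-free, $\tilde Z\to Z$ is an \'etale double cover, and $X\cong Z=\tilde Z/\iota$ is the quotient of a K3 surface by a free involution, i.e. an Enriques surface; in particular $K_X$ is the nontrivial $2$-torsion class determined by this cover.

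The routine parts are the Bertini and Lefschetz invocations; the part requiring care is the chain of identifications $X\cong Z\subset\P_A$, $\tilde Z\subset\P_B$, and $\tilde Z\cong\{Q_1=Q_2=Q_3=0\}\subset\P^5$, together with the single genericity input — that three general plane conics have no common zero — which is what simultaneously forces $\tilde Z$ to be smooth, makes the involution free, and thereby pins down $X$ as an Enriques surface rather than a K3 or a degenerate surface.
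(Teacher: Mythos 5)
Your proposal is correct and follows essentially the same route as the paper: identify $X$ with the zero locus of three sections of $\mathcal{O}_{\P_A}(1)$, pass to the $\iota$-invariant sections of $\mathcal{O}_{\P_B}(2)$ to realize the double cover as a smooth $(2,2,2)$ complete intersection K3 in $\P_C=\P^5$, and check the cover is \'etale because $X$ (resp.\ $Y$) misses $E_i$ (resp.\ $F_i$) for general $u$. You merely make explicit a few steps the paper leaves implicit, such as the fiberwise rank analysis showing $Z\to X$ is an isomorphism and the observation that the relevant genericity input is that three general conics have no common zero.
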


\begin{proof}
Since the vector bundle $\mathcal{O}(2,0)\oplus \mathcal{O}(0,2)$ is globally generated, $X$ is smooth of dimension two by the Bertini theorem for degeneracy loci. 

To show that $X$ is an Enriques surface, we will describe its K3 cover $Y$.
The map $u$ defines a global section $s$ of $\mathcal{O}(1)^{\oplus 3}$ on the projective bundle $\P_{A}$. When $u$ is generic, the zero set  $Z(s)\subset \P_A$ maps isomorphically onto $X$ via the bundle projection $\P_{A}\rightarrow \P^{2}\times \P^{2}$.

On the other hand, the map $u$ also defines a global section of $\mathcal{O}(2)^{\oplus 3}$ on $\P_{B}$ invariant under the action of $\iota$.
Indeed, as $(q_{*}\mathcal{O}_{\P_{B}}(2))^{\iota}=(q_{*}q^{*}\mathcal{O}_{\P_{A}}(1))^{\iota}=\mathcal{O}_{\P_{A}}(1)$, where $q\colon \P_{B}\rightarrow \P_{A}=\P_{B}/\iota$ is a natural projection,
we have a natural identification
\[
H^{0}(\P_{B},\mathcal{O}(2))^{\iota}=H^{0}(\P_{A}, \mathcal{O}(1))=H^{0}(\P^{2}\times \P^{2},\mathcal{O}(2,0)\oplus \mathcal{O}(0,2)).
\]
Let $Y\subset \P_B$ denote the zero set of this section. When $u$ is general, we have $X\cap E_{i}=Y\cap F_{i}=\emptyset$, so $Y$ maps isomorphically to a smooth intersection of three quadrics in $\P_C$ via the blow-down map $\P_B\to \P_C$. In particular, $Y$ is a K3 surface. Again since $Y\cap F_{i}=\emptyset$, the composition $\P_{B}\rightarrow \P_A\to \P^{2}\times \P^{2}$ restricts to an \'etale double cover $Y\rightarrow X$. Hence $X$ is an Enriques surface.
\end{proof}

\begin{rem}\label{r}
The proof of Lemma \ref{lES} shows that the construction of Enriques surfaces introduced above coincides with a classical one from \cite[Example VIII.18]{B}.
\end{rem}

We will now use a variant of the above construction to construct pencils of Enriques surfaces.
On $\P^{1}\times \P^{2}\times \P^{2}$, we consider a map of vector bundles
\[
v\colon \mathcal{O}^{\oplus 3} \rightarrow \mathcal{O}(1,2,0)\oplus \mathcal{O}(1,0,2).
\]
Let $\mathcal{X}$ be the rank one degeneracy locus of $v$.

\begin{lem}\label{l1}
If $v$ is general, then $\mathcal{X}$ is a pencil of Enriques surfaces by the first projection $\mathcal{X}\rightarrow \P^{1}$.
Moreover, we have $H^{i}(\mathcal{X},\mathcal{O}_{\mathcal{X}})=0$ for all $i>0$.
\end{lem}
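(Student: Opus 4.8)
The plan is to establish the three assertions—smoothness with the correct dimension, the Enriques fibers, and the $\mathcal{O}$-acyclicity—separately, the first two by the same reasoning as Lemma \ref{lES} and the third by an explicit cohomology computation. First I would invoke the Bertini theorem for degeneracy loci: since $\mathcal{O}(1,2,0)\oplus\mathcal{O}(1,0,2)$ is globally generated, for general $v$ the locus $\mathcal{X}=D_1(v)$ is smooth of the expected codimension $(3-1)(2-1)=2$, hence a smooth threefold, while $D_0(v)$ has expected codimension $6>5$ and is therefore empty. To identify the general fiber of $\mathcal{X}\to\P^1$, I would fix a general $t\in\P^1$ and restrict: since $\mathcal{O}(1,2,0)|_{\{t\}\times\P^2\times\P^2}\cong\mathcal{O}(2,0)$ and similarly for the other summand, the fiber $\mathcal{X}_t$ is the rank one degeneracy locus of $v_t\colon\mathcal{O}^{\oplus 3}\to\mathcal{O}(2,0)\oplus\mathcal{O}(0,2)$ on $\P^2\times\P^2$. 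Because $H^0(\P^1,\mathcal{O}(1))\to\C$ is surjective, the K\"unneth formula shows that restriction of global sections is surjective, so a general $v$ yields a general $v_t$; Lemma \ref{lES} then gives that $\mathcal{X}_t$ is an Enriques surface.

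The heart of the argument is the vanishing $H^i(\mathcal{X},\mathcal{O}_{\mathcal{X}})=0$ for $i>0$, which I would obtain by realizing $\mathcal{X}$ as a zero locus on a projective bundle, exactly as in Lemma \ref{lES}. Let $\mathcal{E}=\mathcal{O}(1,2,0)\oplus\mathcal{O}(1,0,2)$ on $W=\P^1\times\P^2\times\P^2$ and $\pi\colon\P(\mathcal{E})\to W$. Then $v$ determines a section $\tilde s$ of $\mathcal{O}_{\P(\mathcal{E})}(1)^{\oplus 3}$ whose zero locus maps isomorphically onto $\mathcal{X}$ (the rank-zero locus being empty). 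As $\dim\P(\mathcal{E})=6$ and $\dim\mathcal{X}=3$, the section $\tilde s$ is regular, so the Koszul complex
\[
0\to\mathcal{O}(-3)\to\mathcal{O}(-2)^{\oplus 3}\to\mathcal{O}(-1)^{\oplus 3}\to\mathcal{O}_{\P(\mathcal{E})}\to\mathcal{O}_{\mathcal{X}}\to 0,
\]
with $\mathcal{O}(k)=\mathcal{O}_{\P(\mathcal{E})}(k)$, is exact. I would then compute $H^\ast(\mathcal{X},\mathcal{O}_{\mathcal{X}})$ as the hypercohomology of the truncated complex by pushing forward along $\pi$ and applying K\"unneth on $W$.

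The main obstacle, and the step I expect to require care, is showing that the three twisted terms contribute nothing. For the rank-two bundle $\mathcal{E}$ one has $\det\mathcal{E}=\mathcal{O}(2,2,2)$, the sheaf $\mathcal{O}(-1)$ is $\pi$-acyclic, and the only surviving higher direct images are $R^1\pi_\ast\mathcal{O}(-2)=(\det\mathcal{E})^\vee=\mathcal{O}(-2,-2,-2)$ and $R^1\pi_\ast\mathcal{O}(-3)=(\mathcal{E}\otimes\det\mathcal{E})^\vee=\mathcal{O}(-3,-4,-2)\oplus\mathcal{O}(-3,-2,-4)$. The key numerical point is that each of these line bundles on $W$ carries a factor $\mathcal{O}_{\P^2}(-2)$, and since $H^\ast(\P^2,\mathcal{O}(-2))=0$ their cohomology vanishes entirely by K\"unneth. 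Consequently only $\mathcal{O}_{\P(\mathcal{E})}$ survives, giving $H^i(\mathcal{X},\mathcal{O}_{\mathcal{X}})\cong H^i(W,\mathcal{O}_W)$, which equals $\C$ for $i=0$ and vanishes for $i>0$. The delicate part is precisely this bookkeeping aligning the twists that appear with the acyclic range of $\P^2$; once that is checked, the conclusion follows. I would also note as an alternative that one could instead analyze $Rf_\ast\mathcal{O}_{\mathcal{X}}$ for $f\colon\mathcal{X}\to\P^1$ using that the fibers are $\mathcal{O}$-acyclic, but the global Koszul computation avoids subtleties at the singular fibers and so seems preferable.
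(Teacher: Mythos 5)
Your proposal is correct and follows essentially the same route as the paper: Bertini for degeneracy loci gives smoothness and the dimension, and the $\mathcal{O}$-acyclicity is reduced, via K\"unneth, to the vanishing of the cohomology of the line bundles $\mathcal{O}(-2,-2,-2)$, $\mathcal{O}(-3,-4,-2)$, $\mathcal{O}(-3,-2,-4)$ on $\P^{1}\times\P^{2}\times\P^{2}$. The only cosmetic difference is that you obtain the relevant resolution by pushing the Koszul complex down from $\P(\mathcal{E})$, whereas the paper directly writes the Hilbert--Burch resolution of $\mathcal{I}_{\mathcal{X}}$ given by the $2\times 2$ minors and the rows of the matrix; these are the same complex of line bundles on the base.
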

\begin{proof}
Since the vector bundle $\O(1,2,0)\oplus \O(1,0,2)$ is globally generated, $\mathcal{X}$ is smooth and $\dim \mathcal{X}=3$ by the Bertini theorem for degeneracy loci. 
Moreover, $\mathcal{X}$ is connected since it is defined by three equations of tridegree $(2,2,2)$.
The resolution of the ideal sheaf $\mathcal{I}_{\mathcal{X}}$ of $\mathcal{X}$ in $\P^{1}\times \P^{2}\times \P^{2}$ has the form
\[
0\to \O(-3,-4,-2)\oplus \O(-3,-2,-4) \to \O(-2,-2,-2)^{\oplus3}\to \mathcal{I}_{\mathcal{X}}\to 0.
\]
From this it follows that $H^{i}(\mathcal{X},\mathcal{O}_{\mathcal{X}})=0$ for all $i>0$.
\end{proof}

We assume that $v$ is general in what follows.

\begin{lem}\label{l2}
The degree homomorphism $\deg \colon CH_{0}(\mathcal{X})\rightarrow \Z$ is an isomorphism.
\end{lem}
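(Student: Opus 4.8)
The plan is to identify the degree map with the pushforward $f_{*}\colon CH_{0}(\mathcal{X})\to CH_{0}(\P^{1})\cong\Z$ along the fibration $f\colon\mathcal{X}\to\P^{1}$, so that surjectivity is immediate (any closed point has degree one) and the whole content becomes the injectivity, i.e. the vanishing of the kernel $CH_{0}(\mathcal{X})_{\deg 0}:=\ker(\deg)$. I would prove this vanishing by establishing separately that this group is (A) torsion and (B) torsion-free.

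For (B) I would invoke Lemma \ref{l1}: since $H^{1}(\mathcal{X},\mathcal{O}_{\mathcal{X}})=0$, the Albanese variety $\Alb(\mathcal{X})$ vanishes, and Roitman's theorem shows that the torsion subgroup of $CH_{0}(\mathcal{X})_{\deg 0}$ injects into $\Alb(\mathcal{X})=0$. Hence $CH_{0}(\mathcal{X})_{\deg 0}$ is torsion-free; this is precisely where the cohomological vanishing proved just before gets used.

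For (A), the two inputs are that \emph{all fibres of $f$ are mutually rationally equivalent} (being the pullbacks $f^{*}[t]$ of points of $\P^{1}$) and that \emph{smooth fibres have trivial $CH_{0}$} by Bloch--Kas--Lieberman \cite{BKL}. Fix a general point $0\in\P^{1}$, a base point $O$ on the Enriques surface $\mathcal{X}_{0}$, and let $P$ lie on a general fibre $\mathcal{X}_{b}$. Choosing an irreducible multisection $C\ni P$ finite of degree $d$ over $\P^{1}$ (e.g. a general complete-intersection curve through $P$) and passing to its normalization $g\colon\widetilde{C}\to\P^{1}$, the relation $[b]=[0]$ in $CH_{0}(\P^{1})$ pulls back to $g^{-1}(b)\sim g^{-1}(0)$ and pushes forward to $[C\cap\mathcal{X}_{b}]\sim[C\cap\mathcal{X}_{0}]$ in $CH_{0}(\mathcal{X})$. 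Applying \cite{BKL} inside the smooth Enriques surfaces $\mathcal{X}_{b}$ and $\mathcal{X}_{0}$ gives $[C\cap\mathcal{X}_{b}]\sim d[P]$ and $[C\cap\mathcal{X}_{0}]\sim d[O]$, whence $d([P]-[O])=0$. So every class coming from a general fibre is torsion; combined with (B) it is in fact zero, i.e. $[P]=[O]$ for every $P$ on a general fibre.

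The main obstacle is the finitely many \emph{singular} fibres $\mathcal{X}_{b_{1}},\dots,\mathcal{X}_{b_{r}}$, where \cite{BKL} does not apply directly. I would first set $V=\P^{1}\setminus\{b_{1},\dots,b_{r}\}$ and $\mathcal{X}_{V}=f^{-1}(V)$; since $V$ is affine we have $CH_{0}(V)=0$, and the same multisection argument (now using $[t]=0$ in $CH_{0}(V)$ together with \cite{BKL} on the smooth fibres over $V$) shows that $CH_{0}(\mathcal{X}_{V})$ is torsion. The localization sequence
\[
\bigoplus_{j}CH_{0}(\mathcal{X}_{b_{j}})\longrightarrow CH_{0}(\mathcal{X})\longrightarrow CH_{0}(\mathcal{X}_{V})\longrightarrow 0
\]
then reduces the claim to showing that the degree-zero part of the image of $\bigoplus_{j}CH_{0}(\mathcal{X}_{b_{j}})$ is torsion. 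Proving this is the delicate point: one must move a zero-cycle supported on a singular fibre onto the general fibres, again via multisections and the rational equivalence of fibres, using that these degenerate fibres are mild enough (for the explicit $\mathcal{X}$ of Lemma \ref{l1} one can check directly that the singular fibres have $CH_{0}$ trivial modulo torsion). Granting this, $CH_{0}(\mathcal{X})_{\deg 0}$ is torsion, and by (B) it vanishes, giving the isomorphism $\deg\colon CH_{0}(\mathcal{X})\xrightarrow{\sim}\Z$.
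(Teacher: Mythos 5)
Your overall skeleton agrees with the paper's: both arguments reduce to showing $\Ker(\deg)$ is simultaneously torsion and torsion-free, and your part (B) (Roitman \cite{R} plus $\Alb(\mathcal{X})=0$ from $H^{1}(\mathcal{X},\mathcal{O}_{\mathcal{X}})=0$ in Lemma \ref{l1}) is exactly the paper's. Where you diverge is in how the torsion statement is obtained. The paper first applies the moving lemma to represent any zero-cycle by one supported on a union of \emph{smooth} fibers, uses Bloch--Kas--Lieberman \cite{BKL} fiberwise to conclude that $CH_{0}(\mathcal{X})$ is supported on a single complete-intersection curve $C$, and then quotes the Bloch--Srinivas decomposition of the diagonal \cite{BS} to get that the Albanese kernel is torsion. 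You instead run an explicit multisection computation: pulling back $[b]\sim[0]$ to the normalization of a degree-$d$ multisection and applying \cite{BKL} on the two smooth fibers gives $d([P]-[O])=0$. This is a legitimate and more elementary substitute for the decomposition of the diagonal on points of smooth fibers, and it even produces a uniform torsion bound $d$; the trade-off is that it only sees points lying on smooth fibers.

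That is where your proposal has a genuine gap. Your treatment of the singular fibers — the localization sequence followed by the assertion that ``one can check directly that the singular fibres have $CH_{0}$ trivial modulo torsion'' — is not carried out, and as stated it is not a routine verification: the degenerate fibers of such a pencil need not be surfaces to which \cite{BKL} or any easy $CH_{0}$ computation applies, and computing Chow groups of singular surfaces is delicate. Moreover the localization detour is unnecessary: the correct (and standard) fix is the one the paper uses at the outset, namely the moving lemma on the smooth projective threefold $\mathcal{X}$. Any zero-cycle is rationally equivalent to one avoiding the finitely many singular fibers (through any point there is a curve not contained in the union of singular fibers, and one moves the point along the normalization of that curve), after which your multisection argument applies verbatim. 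With that one sentence inserted in place of the localization step, your proof closes; without it, the torsion statement (A) is not established.
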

\begin{proof}
Let $C\subset \mathcal{X}$ be a smooth curve which is a complete intersection of very ample divisors.
Then $CH_{0}(\mathcal{X})$ is supported on $C$.
This follows from the fact that 
any class in $CH_{0}(\mathcal{X})$ is represented by a zero-cycle supported on a union of smooth fibers of the first projection $\mathcal{X}\rightarrow \P^{1}$ by the moving lemma,
and that the Chow group of zero-cycles on any given Enriques surface is trivial due to Bloch--Kas--Lieberman \cite{BKL}.

We consider a natural homomorphism $\phi\colon \Ker(\deg)\rightarrow \Alb(\mathcal{X})$ induced by the Albanese map.
Since $CH_{0}(\mathcal{X})$ is supported on a curve, the decomposition of the diagonal \cite{BS} implies that $\Ker(\phi)$ is torsion.
Moreover $\Ker(\phi)$ is torsion-free by the Roitman theorem \cite{R}.
Hence we have $\Ker(\phi)=0$ and $\phi$ is an isomorphism. 
In our situation, $\Alb(\mathcal{X})=0$ since $H^{1}(\mathcal{X},\mathcal{O}_{\mathcal{X}})=0$ by Lemma \ref{l1}. 
Therefore $\Ker(\deg)=0$.
The proof is complete.
\end{proof}

To study the geometric properties of the threefold $\mathcal{X}$ in more detail, it will be convenient to involve its double cover. Recalling the construction above, we get a diagram
\[
\xymatrix{
\P^{1}\times \P_{B} \ar[rr] \ar[d] && \P^{1}\times \P_{A} \ar[d] \\
\P^{1}\times \P_{C}                 && \P^{1}\times \P^{2}\times \P^{2}         
},
\]
where $\P^{1}\times \P_{B}\rightarrow \P^{1}\times \P_{A}$ is the quotient by the involution $\iota$ (which acts as before on $\P_{B}$ and as the identity on the first factor)
and $\P^{1}\times \P_{B}\rightarrow \P^{1}\times \P_{C}$ is the blow-up of $\P^{1}\times \P_{C}$ along the union of $\P^{1}\times P_{1}$ and $\P^{1}\times P_{2}$.
Restricting to $\mathcal{X}$, we get the following diagram
\[
\xymatrix{
\mathcal{Y} \ar[rr] \ar[d] && \mathcal{X}' \ar[d]^{\simeq} \\
\mathcal{Y}_{\min}            && \mathcal{X}        
}.
\]
The varieties appearing in this diagram can be described as follows.
The map $v$ induces a global section of $\mathcal{O}(1,1)^{\oplus 3}$ on $\P^{1}\times \P_{A}$
as well as global sections of $\mathcal{O}(1,2)^{\oplus 3}$ on $\P^{1}\times \P_{B}$ and $\P^{1}\times \P_{C}$ which are invariant under the action of $\iota$;
the varieties $\mathcal{X}', \mathcal{Y}, \mathcal{Y}_{\min}$ are the zero sets of these sections.
By generality, $\mathcal{X}',\mathcal{Y},\mathcal{Y}_{\min}$ are smooth threefolds;
$\mathcal{X}'$ is mapped isomorphically onto $\mathcal{X}$, so we can identify $\mathcal{X}'$ with $\mathcal{X}$;
$\mathcal{Y}$ is a double cover of $\mathcal{X}'=\mathcal{X}$; and $\mathcal{Y}_{\min}$ is a minimal model of $\mathcal{Y}$. Note that $\Y$ and $\Y_{\min}$ are K3 surface fibrations via the first projection.

An easy computation shows that each of the intersections $\mathcal{Y}_{\min}\cap (\P^{1}\times P_{i})$ consists of twelve points $y_{i,1},\cdots, y_{i,12}$.
Then the map $\mathcal{Y}\rightarrow \mathcal{Y}_{\min}$ is the blow-up of $\Y_{\min}$ along $y_{i,j}$ whose exceptional divisors $F_{i,j}$ are the components of $\mathcal{Y}\cap (\P^{1}\times F_{i})$.
Moreover the double cover $\mathcal{Y}\rightarrow \mathcal{X}$ is ramified along $F_{i,j}$ which are mapped isomorphically onto $E_{i,j}$, the components of $\mathcal{X}\cap (\P^{1}\times E_{i})$.

\begin{lem}
The threefold $\mathcal X$ has Kodaira dimension one.
\end{lem}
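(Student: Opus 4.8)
The plan is to compute the canonical bundle $K_{\X}$ explicitly and then bound the Iitaka dimension from both sides. For the computation I would work on the projective-bundle model, using the identification of $\X$ with $\X'\subset \P^{1}\times \P_{A}$, the zero locus of a general section of $\O(1,1)^{\oplus 3}$, where $\P_{A}=\P_{\P^{2}\times\P^{2}}(\O(2,0)\oplus\O(0,2))$ carries the relative hyperplane class $\xi$ and $\pi\colon \P_{A}\to \P^{2}\times\P^{2}$ is the projection. Since $\X'$ is a complete intersection of three divisors in $|\O(1,1)|$, adjunction gives $\omega_{\X'}=(\omega_{\P^{1}\times\P_{A}}\otimes \O(3,3))|_{\X'}$. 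Using the relative canonical formula $\omega_{\P_{A}/\P^{2}\times\P^{2}}=\O_{\P_{A}}(-2)\otimes\pi^{*}\O(2,2)$ together with $\omega_{\P^{2}\times\P^{2}}=\O(-3,-3)$, a direct computation yields $\omega_{\X'}=(\,h+\xi-H_{1}-H_{2}\,)|_{\X'}$, where $h$ is the pullback of the hyperplane class of $\P^{1}$ and $H_{1},H_{2}$ are the hyperplane classes of the two $\P^{2}$ factors.

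Next I would rewrite this class so that its positivity becomes transparent. On $\P_{A}$ the two sections $E_{1},E_{2}$ satisfy $[E_{1}]=\xi-2H_{2}$ and $[E_{2}]=\xi-2H_{1}$, so that $2\xi=[E_{1}]+[E_{2}]+2H_{1}+2H_{2}$. Substituting, the classes $H_{1},H_{2}$ cancel and one finds
\[
2K_{\X}=2h+[E_{1}]|_{\X}+[E_{2}]|_{\X}=f^{*}\O_{\P^{1}}(2)+E,
\]
where $f\colon \X\to\P^{1}$ is the fibration and $E=\sum_{i,j}E_{i,j}$ is the union of the components of $\X\cap(\P^{1}\times E_{i})$. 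The restrictions $[E_{i}]|_{\X}$ are effective because $\X$ is not contained in $\P^{1}\times E_{i}$, so $E$ is an effective divisor. Moreover, by Lemma \ref{lES} a general fibre meets no $E_{i}$, so $E$ is vertical, supported over finitely many points of $\P^{1}$; this is consistent with $K_{\X}|_{S}=K_{S}$ being $2$-torsion on a general Enriques fibre $S$.

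With this formula the lower bound is immediate. For every $m$,
\[
h^{0}(\X,2mK_{\X})=h^{0}\bigl(\X,f^{*}\O_{\P^{1}}(2m)+mE\bigr)\ge h^{0}\bigl(\X,f^{*}\O_{\P^{1}}(2m)\bigr)=2m+1,
\]
where the inequality uses that $mE$ is effective (multiplication by its defining section is injective on global sections) and the last equality uses $f_{*}\O_{\X}=\O_{\P^{1}}$, which holds since the fibres are connected. Hence $h^{0}(\X,2mK_{\X})$ grows linearly and $\kappa(\X)\ge 1$. For the upper bound I would invoke Iitaka's easy addition inequality $\kappa(\X)\le \kappa(S)+\dim\P^{1}=0+1=1$, using that the general fibre $S$ is an Enriques surface with $\kappa(S)=0$. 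Combining the two bounds gives $\kappa(\X)=1$.

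The main work is the canonical bundle computation, and the step that requires care is the passage from $\omega_{\X}=h+\xi-H_{1}-H_{2}$ to the effective expression above: one must use the tautological relations on $\P_{A}$ to trade the relative class $\xi$ for the boundary divisors $E_{i}$, and then check that these restrict to honest effective (indeed vertical) divisors on $\X$ rather than merely numerically effective classes. Everything else is bookkeeping with the conventions for Grothendieck projectivization and with adjunction.
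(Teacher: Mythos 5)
Your computation lands on exactly the formula the paper uses, $2K_{\X}=2X+\sum_{i,j}E_{i,j}$ (with $X$ the fiber class of $f\colon\X\to\P^{1}$), and the derivation by adjunction on $\P^{1}\times\P_{A}$ together with the relations $[E_{1}]=\xi-2H_{2}$, $[E_{2}]=\xi-2H_{1}$ is correct. Where you genuinely differ is in converting the formula into $\kappa(\X)=1$. The paper's finish is to note that the normal bundles $N_{E_{i,j}/\X}=\O_{\P^{2}}(-2)$ are negative: since each $E_{i,j}\simeq\P^{2}$ lies in a single fiber and the $E_{i,j}$ are pairwise disjoint, $(2mK_{\X})|_{E_{i,j}}=\O_{\P^{2}}(-2m)$ has no sections, so the $E_{i,j}$ are fixed components of every pluricanonical system, $H^{0}(\X,2mK_{\X})=H^{0}(\X,f^{*}\O_{\P^{1}}(2m))$, and $\kappa(\X)=1$ with $f$ identified as the Iitaka fibration. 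You instead prove the two inequalities separately: $\kappa(\X)\ge 1$ from effectivity of $E$ together with $f_{*}\O_{\X}=\O_{\P^{1}}$, and $\kappa(\X)\le 1$ from Iitaka's easy addition inequality $\kappa(\X)\le\kappa(S)+\dim\P^{1}$ using that the general fiber $S$ is an Enriques surface with $\kappa(S)=0$. Both finishes are valid. Yours imports a general theorem and the classification fact $\kappa(S)=0$ but needs no information about the $E_{i,j}$ beyond effectivity; the paper's is self-contained modulo the normal-bundle computation and gives slightly more, namely the exact plurigenera and the fixed part of the pluricanonical systems.
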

\begin{proof}
Let $X$ be the class of a fiber of the first projection $\mathcal{X}\rightarrow \P^{1}$.
It is straightforward to compute that
\[
2K_{\mathcal{X}}=2X+ \sum_{i=1}^{2}\sum_{j=1}^{12}E_{i,j}.
\]
As the normal bundles $N_{E_{i,j}/\mathcal{X}}=\mathcal{O}_{\P^{2}}(-2)$ are negative, we obtain that $\kappa(\mathcal{X})=1$.
\end{proof}

\begin{lem}
The Hodge numbers of $\mathcal{X}$ are given by $h^{0,0}(\mathcal{X})=h^{3,3}(\mathcal{X})=1$, $h^{1,1}(\mathcal{X})=h^{2,2}(\mathcal{X})=26$, $h^{1,2}(\mathcal{X})=h^{2,1}(\mathcal{X})=45$, and $h^{p,q}(\mathcal{X})=0$ otherwise.
\end{lem}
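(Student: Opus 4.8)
The plan is to reduce the statement to just two unknown numbers and then pin those down using the double cover $\mathcal{Y}\to\mathcal{X}$ constructed above. First I would record the constraints coming from Lemma \ref{l1}: since $H^{q}(\mathcal{X},\O_{\mathcal{X}})=0$ for $q>0$, we have $h^{0,0}=h^{3,3}=1$ and $h^{p,0}=h^{0,p}=0$ for $p=1,2,3$. Combining this with Hodge symmetry $h^{p,q}=h^{q,p}$ and Serre duality $h^{p,q}=h^{3-p,3-q}$, the only Hodge numbers not forced to be $0$ or $1$ are $h^{1,1}=h^{2,2}$ and $h^{1,2}=h^{2,1}$. In particular, because $h^{2,0}=0$ we have $b_{2}(\mathcal{X})=h^{1,1}$ and $b_{3}(\mathcal{X})=2h^{1,2}$, so
\[
\chi_{\mathrm{top}}(\mathcal{X})=2+2h^{1,1}-2h^{1,2}.
\]
Thus it is enough to determine $h^{1,1}$ and $\chi_{\mathrm{top}}(\mathcal{X})$.

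For $h^{1,1}=b_{2}$, I would use that the quotient map $\pi\colon\mathcal{Y}\to\mathcal{X}=\mathcal{Y}/\sigma$ by the deck involution $\sigma$ identifies $H^{*}(\mathcal{X},\Q)$ with the invariant part $H^{*}(\mathcal{Y},\Q)^{+}$. Now $\mathcal{Y}$ is the blow-up of $\mathcal{Y}_{\min}$ at the $24$ points $y_{i,j}$, and $\mathcal{Y}_{\min}$ is a smooth complete intersection of three divisors of type $(1,2)$ in $\P^{1}\times\P_{C}=\P^{1}\times\P^{5}$, cut out by ample classes. By the Lefschetz hyperplane theorem $H^{2}(\mathcal{Y}_{\min},\Q)\cong H^{2}(\P^{1}\times\P^{5},\Q)$ has rank $2$ and is of pure type $(1,1)$; each of the $24$ point blow-ups contributes one further $(1,1)$-class, so $b_{2}(\mathcal{Y})=2+24=26$. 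Finally $\sigma$ acts trivially on $H^{2}(\mathcal{Y},\Q)$: the $24$ exceptional divisors $F_{i,j}$ lie in the ramification locus and are therefore $\sigma$-invariant, while the two classes pulled back from $H^{2}(\P^{1}\times\P_{C})$ are invariant because $\iota$ acts as the identity on $\P^{1}$ and by a linear involution on $\P_{C}\cong\P^{5}$, hence trivially on $H^{2}$. Therefore $h^{1,1}(\mathcal{X})=\dim H^{2}(\mathcal{Y},\Q)^{+}=b_{2}(\mathcal{Y})=26$.

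It remains to compute $\chi_{\mathrm{top}}(\mathcal{X})$, which I would obtain from $\mathcal{Y}_{\min}$ along the same chain of maps. A Chern class computation in the Chow ring of $\P^{1}\times\P^{5}$, using the normal bundle sequence of $\mathcal{Y}_{\min}$ and $[\mathcal{Y}_{\min}]=c_{1}(\O(1,2))^{3}$, gives $\chi_{\mathrm{top}}(\mathcal{Y}_{\min})=-192$. Each point blow-up changes the Euler characteristic by $+2$, so $\chi_{\mathrm{top}}(\mathcal{Y})=-192+48=-144$; and since $\pi$ is a double cover branched along $B=\bigsqcup_{i,j}E_{i,j}$, a disjoint union of $24$ planes with $\chi_{\mathrm{top}}(B)=72$, the branched-cover formula $\chi_{\mathrm{top}}(\mathcal{Y})=2\chi_{\mathrm{top}}(\mathcal{X})-\chi_{\mathrm{top}}(B)$ gives $\chi_{\mathrm{top}}(\mathcal{X})=(-144+72)/2=-36$. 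Substituting $h^{1,1}=26$ and $\chi_{\mathrm{top}}(\mathcal{X})=-36$ into the displayed identity yields $h^{1,2}=45$, completing the computation. (Alternatively, $\chi_{\mathrm{top}}(\mathcal{X})$ can be computed directly from $\mathcal{X}\subset\P^{1}\times\P_{A}$, a complete intersection of three divisors of class $\O(1,1)$, at the cost of working in the Chow ring of the projective bundle $\P_{A}$.)

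The conceptual steps are clean, and I expect the main obstacle to be bookkeeping rather than ideas. Concretely, the one genuine computation is the Chern number $\chi_{\mathrm{top}}(\mathcal{Y}_{\min})=-192$, and the one point requiring care is the triviality of the $\sigma$-action on $H^{2}(\mathcal{Y})$ — i.e. ruling out anti-invariant $(1,1)$-classes — which relies on the explicit descriptions of the branch locus and of the involution recalled in the preceding discussion.
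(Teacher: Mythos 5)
Your proposal is correct and follows essentially the same route as the paper: reduce to $h^{1,1}$ and $h^{1,2}$ via Lemma \ref{l1}, get $b_2(\mathcal{X})=26$ from $\mathcal{Y}_{\min}$ (Lefschetz), the $24$ exceptional divisors, and the triviality of the $\iota$-action, and then extract $h^{1,2}=45$ from $\chi_{\Top}(\mathcal{X})=-36$. The only difference is cosmetic: the paper computes $\chi_{\Top}(\mathcal{X})=c_3(T_{\mathcal{X}})$ directly, whereas you route the Euler characteristic through $\chi_{\Top}(\mathcal{Y}_{\min})=-192$ and the branched-cover formula, and both computations check out.
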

\begin{proof}
We first compute the Picard number $\rho(\mathcal{X})$.
Using the Lefschetz hyperplane section theorem, $\mathcal{Y}_{\min}$ has Picard number two, so $\rho(\mathcal{Y})=\rho(\mathcal{Y}_{\min})+24=26$.
Moreover, the action of $\iota$ on the Picard group of $\mathcal{Y}$ is trivial,
so also $\rho(\mathcal{X})=26$.

We next compute the Betti numbers $b_{i}(\mathcal{X})$.
It is straightforward to compute the topological Euler characteristic $\chi_{\Top}(\mathcal{X})=c_{3}(T_{\mathcal{X}})=-36$.
Obviously $b_{0}(\mathcal{X})=b_{6}(\mathcal{X})=1$.
Moreover, $b_{1}(\mathcal{X})=b_{5}(\mathcal{X})=0$ and $b_{2}(\mathcal{X})=b_{4}(\mathcal{X})=\rho(\mathcal{X})=26$ using Lemma \ref{l1}.
Therefore $b_{3}(\mathcal{X})=90$. 

Now the computation of the Hodge numbers are immediate using Lemma \ref{l1} again.
\end{proof}

We next study the topology of $\mathcal X$. We fix the following notation:
\begin{itemize}[leftmargin=*, label={-}]
\item $\mathcal{X}_{\min}=\mathcal{Y}_{\min}/\iota$;
\item $\mathcal{Y}^{\circ}=\mathcal{Y}_{\min}- \left\{y_{i,j}\right\}_{i,j}$;
\item $\mathcal{X}^{\circ}=\mathcal{Y}^{\circ}/\iota$;
\item $V_{i,j}\subset \mathcal Y$, a small ball around $y_{i,j}$;
\item $U_{i,j}=V_{i,j}/\iota$.
\end{itemize}
We have $\mathcal{Y}_{\min}=\mathcal{Y}^{\circ}\cup \left(\bigcup_{i,j}V_{i,j}\right)$ and $\mathcal{X}_{\min}=\mathcal{X}^{\circ}\cup \left(\bigcup_{i,j}U_{i,j}\right)$.

\begin{lem}\label{l3}
The threefold $\mathcal{X}$ is simply connected, and the cohomology groups $H^{i}(\mathcal{X},\Z)$ are torsion-free for all $i$. 
\end{lem}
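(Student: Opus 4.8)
The plan is to exploit the explicit geometry established above: $\mathcal{X}$ is built from $\mathcal{X}_{\min}=\mathcal{Y}_{\min}/\iota$ by the étale-away-from-the-$E_{i,j}$ double cover $\mathcal{Y}\to\mathcal{X}$, and $\mathcal{Y}$ itself is a blow-up of $\mathcal{Y}_{\min}$ at the $24$ points $y_{i,j}$. Since $\mathcal{Y}_{\min}$ is a complete intersection in $\P^1\times\P_C$ (and more directly, $\mathcal{Y}$ is a smooth complete intersection of three divisors of class $\mathcal{O}(1,2)$ in $\P^1\times\P_B$), I would first record that $\mathcal{Y}$ is simply connected with torsion-free cohomology. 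This follows from the Lefschetz hyperplane theorem applied to the ambient smooth projective bundle: $\mathcal{Y}_{\min}$ inherits $\pi_1=0$ and torsion-free $H^i$ from $\P^1\times\P_C$ (resp.\ $\P^1\times\P_B$), and blowing up smooth points preserves both properties, so the same holds for $\mathcal{Y}$.

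The heart of the argument is to descend these properties along the double cover $\pi\colon\mathcal{Y}\to\mathcal{X}$, which is ramified precisely along the divisors $F_{i,j}$ mapping isomorphically to $E_{i,j}$. For simple connectivity, I would use the decomposition $\mathcal{X}=\mathcal{X}^\circ\cup\big(\bigcup_{i,j}U_{i,j}\big)$ together with a van Kampen argument: the étale double cover $\mathcal{Y}^\circ\to\mathcal{X}^\circ$ shows $\pi_1(\mathcal{X}^\circ)$ fits in an extension by $\pi_1(\mathcal{Y}^\circ)$, and one checks the covering class dies once the divisors $E_{i,j}$ (equivalently the balls $U_{i,j}$) are glued back in, because a small loop around the branch divisor lifts to a path in $\mathcal{Y}$ whose endpoints are exchanged by $\iota$. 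Concretely, the normal circle to $E_{i,j}$ has nontrivial monodromy for the cover, so it becomes a nontrivial element that, when filled in, forces the index-two subgroup to surject; combined with $\pi_1(\mathcal{Y})=0$ this yields $\pi_1(\mathcal{X})=0$.

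For the cohomology, I would compute $H^*(\mathcal{X},\Z)$ via the transfer/invariants decomposition for the degree-two map $\pi$. Away from torsion this is standard, so the real content is the $2$-primary part, which is exactly where double covers typically create torsion. Here I would argue that $\pi$ is the cyclic cover associated to a divisor class that is $2$-divisible in a suitable sense (the $E_{i,j}$ appear in the canonical formula $2K_{\mathcal{X}}=2X+\sum E_{i,j}$), and analyze the Gysin sequences for the ramification divisors $\coprod E_{i,j}\hookrightarrow\mathcal{X}$ against those for $\coprod F_{i,j}\hookrightarrow\mathcal{Y}$. Since each $E_{i,j}\cong\P^2$ and each $F_{i,j}\cong\P^2$ has torsion-free cohomology, and the map $H^*(E_{i,j})\to H^*(F_{i,j})$ is an isomorphism, the Mayer--Vietoris sequence relating $\mathcal{X}^\circ$, the $U_{i,j}$, and $\mathcal{X}_{\min}$ lets me compare torsion in $H^*(\mathcal{X})$ with torsion in $H^*(\mathcal{X}_{\min})$ and $H^*(\mathcal{Y})$.

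The main obstacle I anticipate is controlling the $2$-torsion precisely at the branch locus: a ramified double cover generically \emph{introduces} $2$-torsion (as in the Benoist--Ottem examples cited in the introduction), so the cancellation here must use the specific geometry — that the branch components are projective planes with simple normal bundles $N_{E_{i,j}/\mathcal{X}}=\mathcal{O}_{\P^2}(-2)$ and that the local model near each $E_{i,j}$ is the quotient of a blow-up. I would isolate this in a local computation of $H^*(U_{i,j},\Z)$ and $H^*(U_{i,j}\setminus E_{i,j},\Z)$, show these are torsion-free with the expected ranks, and feed the result into the global Mayer--Vietoris to conclude that no new torsion survives. Once the local and gluing pieces are torsion-free and the ranks match the Betti numbers already computed, torsion-freeness of $H^i(\mathcal{X},\Z)$ for all $i$ follows by the universal coefficient theorem and Poincaré duality.
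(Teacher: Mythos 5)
Your treatment of $\pi_1$ is essentially the paper's argument: van Kampen applied to the decomposition into $\mathcal{X}^{\circ}$ and the local pieces around the branch locus, with the key observation that the meridian loop surjects onto $\pi_1(\mathcal{X}^{\circ})=\Z/2$ because it lifts to a path joining the two sheets of $\mathcal{Y}^{\circ}\to\mathcal{X}^{\circ}$. (The paper runs this on $\mathcal{X}_{\min}$ and then invokes invariance of $\pi_1$ under resolving cyclic quotient singularities, but the computation is the same; note only that Lefschetz should be applied to $\mathcal{Y}_{\min}\subset\P^1\times\P_C$, where the divisors are ample, rather than inside $\P^1\times\P_B$.) That half is fine.

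The torsion-freeness half has a genuine gap, and one of your intermediate claims is false: $U_{i,j}\setminus E_{i,j}$, equivalently $U_{i,j}\cap\mathcal{X}^{\circ}$, is homotopy equivalent to $(\C^3\setminus 0)/\pm\,\simeq \R\P^5$ (the unit circle bundle of $\mathcal{O}_{\P^2}(-2)$), and $H^{2}(\R\P^5,\Z)=H^{4}(\R\P^5,\Z)=\Z/2$. So the local overlap pieces are \emph{not} torsion-free, and a Mayer--Vietoris argument that feeds in torsion-free local pieces cannot be run as stated; indeed $H^{4}(\mathcal{X}^{\circ},\Z)$ genuinely contains $2$-torsion (no contradiction, since $\mathcal{X}^{\circ}$ is open). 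The paper's route around this is: first use the universal coefficient theorem and Poincar\'e duality for the simply connected closed $6$-manifold $\mathcal{X}$ to reduce everything to the single assertion that $H^{3}(\mathcal{X},\Z)$ is torsion-free; then observe $H^{3}(\mathcal{X},\Z)$ injects into $H^{3}(\mathcal{X}^{\circ},\Z)$ because $H^{3}_{E_{i,j}}(\mathcal{X},\Z)\cong H_{3}(E_{i,j},\Z)=0$; and finally --- the tool missing from your sketch --- apply the Cartan--Leray spectral sequence for the free $\Z/2$-action on $\mathcal{Y}^{\circ}$, where $E_2^{1,2}=E_2^{2,1}=E_2^{3,0}=0$ (using $H^{1}(\mathcal{Y}^{\circ},\Z)=0$, the triviality of the $\iota$-action on $H^{q}$ for $q\le 2$, and the vanishing of $H^{p}(\Z/2,\Z)$ for $p$ odd), yielding an injection $H^{3}(\mathcal{X}^{\circ},\Z)\hookrightarrow H^{3}(\mathcal{Y}^{\circ},\Z)^{\iota}$ into a torsion-free group. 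Your transfer/invariants step only controls odd torsion; it is precisely this degree-$3$ vanishing of group-cohomology terms that kills the $2$-torsion, and it works in degree $3$ but fails in degree $4$ --- which is why the reduction to $H^{3}$ must come first, not last.
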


\begin{proof}
By the universal coefficient theorem, it is enough to prove that $\pi_{1}(\mathcal{X})=0$ and $H^{3}(\mathcal{X},\Z)$ is torsion-free.

We first prove that $\pi_{1}(\mathcal{X})=0$.
We have a natural pushout diagram
\[
\xymatrix{
\pi_1(U_{i,j}\cap \mathcal{X}^{\circ}) \ar[r] \ar[d] & \pi_1(\mathcal{X}^{\circ}) \ar[d] \\
\pi_1(U_{i,j}) \ar[r]                 & \pi_1(\mathcal{X}_{\min})          
}.
\]
By Lefschetz, $\mathcal Y$ and hence $\mathcal{Y}^{\circ}$ is simply connected. So since the quotient map $\pi\colon\mathcal{Y}^{\circ}\to \mathcal{X}^{\circ}$ is \'etale, we have $\pi_1(\mathcal{X}^{\circ})=\Z/2$.
The neighbourhood $U_{i,j}\subset \mathcal X$ is homotopic to the affine cone over a Veronese surface, so we have $\pi_1(U_{i,j})=0$. 
Finally, since the map $V_{i,j}\cap \mathcal{Y}^{\circ} \to U_{i,j}\cap \mathcal{X}^{\circ}$ is homotopic to the universal covering map $(\C^3-0)\to (\C^3-0)/\pm$, we have $\pi_1(U_{i,j}\cap \mathcal{X}^{\circ})=\Z/2$. In fact, this cover is induced by the restriction of $\pi$ to $V_{i,j}\cap \mathcal{Y}^\circ$, so the map $\pi_1(U_{i,j}\cap \mathcal{X}^{\circ})\to \pi_1(\mathcal{X}^{\circ})$ is non-zero, hence an isomorphism. 
From the pushout diagram above, we then get $\pi_1(\mathcal{X}_{\min})=0$. 
Resolving a finite cyclic quotient singularity does not change the fundamental group (\cite[Theorem 7.8]{K}), so we also get $\pi_1(\mathcal{X})=0$.

We next prove that $H^{3}(\mathcal{X},\Z)$ is torsion-free.
The long exact sequence for cohomology groups with supports gives
\[
\bigoplus_{i,j}H_{E_{i,j}}^{3}(\mathcal{X},\Z)\rightarrow H^{3}(\mathcal{X},\Z)\rightarrow H^{3}(\mathcal{X}^{\circ},\Z).
\]
Since $H_{E_{i,j}}^{3}(\mathcal{X},\Z)=H_{3}(E_{i,j},\Z)=0$, the group $H^{3}(\mathcal{X},\Z)$ injects into $H^{3}(\mathcal{X}^{\circ},\Z)$.
In particular, we are reduced to showing that $H^{3}(\mathcal{X}^{\circ},\Z)$ is torsion-free.

Since $\mathcal{X}^{\circ}$ is the quotient of $\mathcal{Y}^{\circ}$ by the group $\langle \iota\rangle\simeq \Z/2$, we can apply the Cartan--Leray spectral sequence 
\[
E_2^{p,q}=H^p(\Z/2,H^q(\mathcal{Y}^{\circ},\Z))\Rightarrow H^{p+q}(\mathcal{X}^{\circ},\Z)
\]
to compute the cohomology groups of $\mathcal{X}^{\circ}$.
We need to compute $H^{q}(\mathcal{Y}^{\circ},\Z)$ for $0\leq q\leq 3$ and the action of $\iota$ on these groups.
Since $\mathcal{Y}^{\circ}$ is obtained from $\mathcal{Y}_{\min}$ by removing finitely many points,
we have an identification $H^{q}(\mathcal{Y}^{\circ},\Z)=H^{q}(\mathcal{Y}_{\min},\Z)$.
Clearly $H^{0}(\mathcal{Y}_{\min},\Z)=\Z$.
By the Lefschetz hyperplane theorem, $H^{1}(\mathcal{Y}_{\min},\Z)=0$, and the groups $H^{2}(\mathcal{Y}_{\min},\Z)$ and $H^3(\mathcal{Y}_{\min},\Z)$ are torsion-free. 
Moreover, the action of $\iota$ on $H^{q}(\mathcal{Y}_{\min},\Z)$ is trivial for $0\leq q \leq 2$.
Since the group cohomology $H^{p}(\Z/2,\Z)=0$ for $p$ odd, it follows that $E^{p,3-p}_{2}=0$ for $p\neq 0$.
Therefore there is an injection
\[H^{3}(\mathcal{X}^{\circ},\Z)\hookrightarrow E^{0,3}_{2}=H^{0}(\Z/2, H^{3}(\mathcal{Y}^{\circ},\Z))=H^{3}(\mathcal{Y}^{\circ},\Z)^{\iota},
\]
where the right hand side is torsion-free.
This completes the proof.
\end{proof}

\section{Proof of Theorem \ref{t}}
We are now ready to prove our main result:
\begin{thm}\label{t'}
There exists a map of vector bundles on $\P^{1}\times \P^{2}\times \P^{2}$
\[
\mathcal{O}^{\oplus 3}\rightarrow \mathcal{O}(1,2,0)\oplus \mathcal{O}(1,0,2)
\]
defined over $\Q$ such that the rank one degeneracy locus $\mathcal{X}$ is a pencil of Enriques surfaces 
such that the cohomology groups $H^{i}(\mathcal{X},\Z)$ are torsion-free for all $i$ and there is a strict inclusion
\[
H^{4}_{\alg}(\mathcal{X},\Z) \subsetneq Hdg^{4}(\mathcal{X},\Z).
\]
\end{thm}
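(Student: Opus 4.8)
The plan is to detect the non-algebraicity by restricting degree-$4$ classes to a general fiber of the pencil, which converts the problem into a statement about the index of the generic Enriques surface. Write $\pi\colon \mathcal{X}\to \P^{1}$ for the pencil and let $X_{t}$ be a general, hence smooth, fiber. Restriction gives $r\colon H^{4}(\mathcal{X},\Z)\to H^{4}(X_{t},\Z)=\Z$, and for an irreducible curve $C\subset \mathcal{X}$ the value $r([C])$ is its degree over $\P^{1}$: it vanishes if $C$ is contained in fibers and equals the covering degree if $C$ is a multisection. Hence $r$ carries $H^{4}_{\alg}(\mathcal{X},\Z)$ onto $i\cdot\Z$, where $i$ is the index of the generic fiber $X_{\eta}/\C(\P^{1})$, i.e. the gcd of the degrees of its closed points. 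The target is generated by the class of a point, which is a Hodge class, so if I can (i) lift this point class to a class $\alpha\in Hdg^{4}(\mathcal{X},\Z)$ and (ii) show $i=2$, then $r(\alpha)=1\notin 2\Z=r(H^{4}_{\alg}(\mathcal{X},\Z))$ exhibits $\alpha$ as a non-algebraic integral Hodge class, automatically of non-torsion type because $r(\alpha)$ is non-torsion. This is exactly the mechanism of \cite[Theorem 7.6]{CTV}, which I would invoke to package (i) and the implication into a single reduction: the inclusion is strict once the generic fiber carries no zero-cycle of odd degree.

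For the existence of $\alpha$ in (i), the natural route is the Leray spectral sequence for $\pi$. The point class generates $E_{2}^{0,4}=H^{0}(\P^{1},R^{4}\pi_{*}\Z)=\Z$, and since the base is a curve the only differential that can destroy it is $d_{2}\colon E_{2}^{0,4}\to E_{2}^{2,3}=H^{2}(\P^{1},R^{3}\pi_{*}\Z)$, whose target is $2$-torsion as $H^{3}$ of an Enriques surface is $\Z/2$. Combining the rational surjectivity of $r$ (the global invariant cycle theorem) with the torsion-freeness of $H^{4}(\mathcal{X},\Z)$ from Lemma \ref{l3}, I would verify that this differential vanishes, so that the point class survives to an integral class restricting to a generator of $\Z$; a type argument then places a suitable lift in $Hdg^{4}(\mathcal{X},\Z)$.

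The heart of the matter is (ii), and more precisely the lower bound: that $X_{\eta}$ has no closed point of odd degree, equivalently that $\mathcal{X}$ has no odd-degree multisection. Here I would argue by specialization, using that $v$ is defined over $\Q$. For each numerical type of multisection the locus of parameters carrying such a curve is closed, so the parameters for which $X_{\eta}$ admits an odd-degree zero-cycle form a countable union of proper closed subsets of the space of maps $v$; it therefore suffices to produce a single member with even index, after which the very general member, and in particular a suitable $\Q$-rational one, has even index as well. To produce such a member I would spread $\mathcal{X}$ out over $\Spec\Z$ and reduce modulo a well-chosen prime $p$: an odd-degree zero-cycle on $X_{\eta}$ would specialize to one on the reduction, which is now an Enriques surface over the global function field $\F_{q}(\P^{1})$, where the index can be controlled place by place. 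I would then exhibit a place of degenerate reduction at which the local index is even, obstructing any odd-degree cycle.

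I expect step (ii), and specifically the choice of the special $v$ together with the verification that its reduction possesses a place of even local index, to be the main obstacle. The evenness of the Enriques lattice $U\oplus E_{8}(-1)$ already forces the self-intersections of fiber divisors to be even, so the obvious algebraic $4$-classes (restrictions of products of divisors) map into $2\Z$, making the index look even "for free'' on the level of the Picard lattice; the genuine difficulty is that an odd-degree zero-cycle need not arise from such classes, so ruling it out really requires the arithmetic input. Everything else — the Leray computation, the type argument for Hodgeness, and the reduction of strictness to the index via \cite{CTV} — is essentially formal given the preceding lemmas.
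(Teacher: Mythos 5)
Your overall framework is sound as far as it goes: the reduction of strictness to the two statements (i) a Hodge class of odd degree over $\P^1$ exists, and (ii) the generic fiber $X_{\eta}/\C(t)$ has even index, is exactly the mechanism of \cite[Theorem 7.6]{CTV} that the paper cites in its introduction. But step (ii) is the entire content of the theorem, and your sketch of it is a genuine gap rather than a proof. The plan --- reduce mod $p$ and control the index of the resulting Enriques surface over a function field ``place by place'' by finding a place of even local index --- is precisely Esnault's expectation about the Lafon/Graber--Harris--Mazur--Starr examples, which the introduction records as \emph{unverified}; the paper exists because the authors could not (or did not) prove evenness of the index and found a way around it. Two further concrete problems: (a) your very-generality argument does not produce a $\Q$-rational member, since a countable union of proper closed subsets can contain every $\Q$-point of the parameter space; and (b) in step (i), the differential $d_2\colon E_2^{0,4}\to H^2(\P^1,R^3\pi_*\Z)$ lands in a $2$-torsion group, and neither the global invariant cycle theorem (which is rational) nor torsion-freeness of $H^4(\mathcal{X},\Z)$ rules out that only twice the generator survives --- which would be fatal, since a class of degree $2$ over $\P^1$ detects nothing.

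The paper's actual argument is different in its key mechanism and deliberately avoids computing the index. It chooses the matrix $M$ over $\Q$ with entries of the form $SP_2+pP_3$ so that, after spreading out and specializing modulo $p$, the degeneracy locus breaks into a union of two \emph{rationally connected} components, on one of which there is a divisor class relation $D_1=D_2+\sum_j E_{1,j}$ with $D_1$ of type $(1,2,0)$ and $D_2$ of type $(0,0,2)$. Via the specialization homomorphism on Chow groups this yields, for every algebraic one-cycle $\alpha$ on $\mathcal{X}$, the congruence $\deg(\alpha/\P^1)\equiv\alpha\cdot\bigl(\sum_j E_{1,j}\bigr)\bmod 2$ --- a relation between \emph{two} numerical invariants, not a statement that $\deg(\alpha/\P^1)$ is even. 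The non-algebraic class is then a topological class $\beta$ with $\deg(\beta/\P^1)=\pm1$ and $\beta\cdot\sum_j E_{1,j}=0$, produced by pushing forward from the K3 double cover $\mathcal{Y}$ a class pulled back from $\mathcal{Y}_{\min}$ (this also replaces your Leray argument for step (i)). In particular the paper never establishes that $\mathcal{X}\to\P^1$ has no odd-degree multisection, and your proof cannot be completed along the proposed lines without resolving that open arithmetic question.
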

\begin{proof}
We set $\P^{1}\times \P^{2}\times \P^{2}=\Proj\C[S,T]\times \Proj\C[X_{0},X_{1},X_{2}]\times \Proj\C[Y_{0},Y_{1},Y_{2}]$.
Fix a sufficiently large prime number $p$. We consider a map of vector bundles as above given by the matrix
\[
M=
\left(
\begin{array}{ccc}
P_{1} & Q_{1} & R_{1}\\
SP_{2}+pP_{3}& SQ_{2}+pQ_{3}& SR_{2}+pR_{3}
\end{array}
\right),
\]
where $P_{1},Q_{1}, R_{1}$ (resp. $P_{2},Q_{2},R_{2}$; $P_{3}, Q_{3}, R_{3}$) are general tri-homogeneous polynomials of tri-degree $(1,2,0)$ (resp. $(0,0,2)$; $(1,0,2)$) over $\Q$.
The degeneracy locus $\mathcal{X}$ is a pencil of Enriques surfaces defined by the $2\times2$-minors of $M$.
The torsion-freeness of the cohomology groups follows from Lemma \ref{l3}, so
it remains to prove that the integral Hodge conjecture does not hold on $\mathcal{X}$.

The closed subscheme defined by $P_{1}=Q_{1}=R_{1}=0$ is a disjoint union of twelve components $E_{1,1},\ldots, E_{1,12}$ isomorphic to $\P^{2}$. We note that this union is defined over $\Q$, even though each $E_{i,j}$ may not be.
First we prove that for a given algebraic one-cycle $\alpha$ on $\mathcal{X}$, we have
\begin{eqnarray}
\deg(\alpha/\P^{1})\equiv \alpha\cdot \left(\sum_{j=1}^{12}E_{1,j}\right) \mod 2.
\end{eqnarray}
We use a specialization argument.
We spread out $\mathcal{X}_{\overline{\Q}}$ over a valuation ring $R$ with the maximal ideal containing $p$.
The ideal of the flat closure of $\mathcal{X}_{\overline{\Q}}$ in $(\P^{1}\times \P^{2}\times \P^{2})_{R}$ is generated by the $2\times 2$-minors of $M$ and
\[
F=\det
\left(
\begin{array}{ccc}
P_{1} & Q_{1} & R_{1}\\
P_{2} & Q_{2} & R_{2}\\
P_{3} & Q_{3} & R_{3}
\end{array}
\right).
\]
The specialization over $\overline{\F}_{p}$ consists of two components: one is a pencil of Enriques surfaces $\widetilde{\mathcal{X}}_{0}$ defined by the $2\times 2$-minors of the matrix
\[
N=\left(
\begin{array}{ccc}
P_{1} & Q_{1} & R_{1}\\
P_{2}& Q_{2} & R_{2}
\end{array}
\right);
\]
the other is defined by $S=F=0$. 
It is straightforward to check that $\widetilde{\mathcal{X}}_{0}$ is smooth. 

The closed subscheme defined by $P_{1}=Q_{1}=R_{1}=0$ is again a disjoint union of twelve components $E_{1,1},\ldots, E_{1,12}$ isomorphic to $\P^{2}$
 and disjoint from the fiber over $S=0$ by the generality of $P_{1},Q_{1},R_{1}$.
 We prove that for a given one-cycle $\alpha_{0}$ on the specialization over $\overline{\F}_{p}$, we have
 \begin{eqnarray}
\deg(\alpha_{0}/\P^{1})\equiv \alpha_{0}\cdot \left(\sum_{j=1}^{12}E_{1,j}\right) \mod 2.
\end{eqnarray}
We may assume that $\alpha_{0}$ is supported on $\widetilde{\mathcal{X}}_{0}$. Let $D_{1}$ be the Cartier divisor on $\widetilde{\mathcal{X}}_{0}$ defined by $P_{1}=0$.
Since $D_{1}$ is of type $(1,2,0)$, we have
\[
\deg (\alpha_{0}/\P^{1})\equiv \alpha_{0}\cdot D_{1} \mod 2.
\]
On the other hand, we have
\[
D_{1}=D_{2}+\sum_{j=1}^{12} E_{1,j},
\]
where $D_{2}$ is the Cartier divisor on $\widetilde{\mathcal{X}}_{0}$ defined by $P_{2}=0$.
Indeed, expanding the $2\times 2$-minors of $N$, it is easily seen that the identity holds on each of the open subsets $P_{2}, Q_{2}, R_{2}\neq 0$;
these open subsets form an open cover of $\widetilde{\mathcal{X}}_{0}$ by the generality of $P_{2},Q_{2},R_{2}$.
Since $D_{2}$ is of type $(0,0,2)$, we have
\[
\alpha_{0}\cdot D_{1} \equiv \alpha_{0}\cdot \left(\sum_{j=1}^{12}E_{1,j}\right) \mod 2.
\]
The congruence (2) follows, so does the congruence (1) by the specialization homomorphism \cite[Section 20.3]{F}.

The Hodge structure of $H^{4}(\mathcal{X},\Z)$ is trivial since we have $H^{2}(\mathcal{X},\mathcal{O}_{\mathcal{X}})=0$ by Lemma \ref{l1}.
The proof of the theorem is reduced to proving that there exists a class $\beta \in H^{4}(\mathcal{X},\Z)=H_{2}(\mathcal{X},\Z)$ such that 
\[\deg(\beta/\P^{1})=\pm 1, \,\beta\cdot \left(\sum_{j=1}^{12}E_{1,j}\right)=0;
\] 
such $\beta$ is not algebraic according to the congruence (1).
Since $E_{1,1},\ldots, E_{1,12}$ are the images of $F_{1,1},\ldots, F_{1,12}$ under the double cover $\mathcal{Y}\rightarrow \mathcal{X}$,
 it is enough to prove that there exists $\gamma \in H^{4}(\mathcal{Y},\Z)=H_{2}(\mathcal{Y},\Z)$ such that
\[
\deg(\gamma/\P^{1})=\pm 1, \, \gamma \cdot \left(\sum_{j=1}^{12}F_{1,j}\right)=0;
\]
the class $\beta$ will be the push-forward of $\gamma$.
By the Lefschetz hyperplane section theorem, the push-forward $H_{2}(\mathcal{Y}_{\min},\Z)\rightarrow H_{2}(\P^{1},\Z)$ is surjective.
Let $\gamma_{\min}\in H^{4}(\mathcal{Y}_{\min},\Z)=H_{2}(\mathcal{Y}_{\min},\Z)$ be an element mapped to a generator of $H_{2}(\P^{1},\Z)$.
Then the pullback $\gamma\in H^{4}(\mathcal{Y},\Z)$ of $\gamma_{\min}$ satisfies the desired property.
The proof is complete.
\end{proof}

\begin{rem}
The specialization used in the proof of Theorem \ref{t'} deserves a few more comments.
The specialization consists of two components: $\widetilde{\mathcal{X}}_{0}$ defined by the $2\times2$-minors of $N$, and $R$ defined by $S=F=0$.
The component $\widetilde{\mathcal{X}}_{0}$ is smooth, and it is a pencil of Enriques surfaces by the first projection $\widetilde{\mathcal{X}}_{0}\rightarrow \P^{1}$. 
On the other hand, $R$ has isolated singularities, and a smooth model $\overline{R}$ of $R$ is another pencil of Enriques surfaces
with a small contraction $\overline{R}\rightarrow R$ contracting $\P^{1}$s over the singular points of $R$.
In addition, $\widetilde{\mathcal{X}}_{0}$ and $R$ intersect in a fiber over $S=0$, and the intersection is an Enriques surface $Z$ in $\P^{2}\times \P^{2}$.

Remarkably, both of the components $\widetilde{\mathcal{X}}_{0}$ and $R$ are rationally connected:
the projections 
\[
\widetilde{\mathcal{X}}_{0}\hookrightarrow \P^{1}\times \P^{2}\times \P^{2}\xrightarrow{pr_{2}} \P^{2}, \,\, R\hookrightarrow \P^{1}\times\P^{2}\times \P^{2}\xrightarrow{pr_{3}}\P^{2}
\] 
are conic bundles, therefore this follows from \cite[Corollary 1.3]{GHS}.
In particular, the integral Hodge conjecture holds on $\widetilde{\mathcal{X}}_{0}$ and $\overline{R}$ by a result of Voisin \cite{V1}.
As a consequence, $H_{2}(\widetilde{\mathcal{X}}_{0},\Z)$ and $H_{2}(R,\Z)$ are generated by algebraic cycles.

It turns out, however, that this is not the case for the union $\widetilde{\mathcal{X}}_{0}\cup R$.
A key point here is the subtle difference between the Mayer-Vietoris sequence for homology groups and Chow groups.
For the homology groups, we have an exact sequence
\[
H_{2}(\widetilde{\mathcal{X}}_{0},\Z)\oplus H_{2}(R,\Z)\rightarrow H_{2}(\widetilde{\mathcal{X}}_{0}\cup R, \Z)\rightarrow H_{1}(Z,\Z)=\Z/2\rightarrow 0.
\]
For the Chow groups, on the other hand, we obviously have a surjection
\[
CH_{1}(\widetilde{\mathcal{X}}_{0})\oplus CH_{1}(R)\twoheadrightarrow CH_{1}(\widetilde{\mathcal{X}}_{0}\cup R)
\]
(see also \cite[Example 1.8.1]{F}).
It follows that $H_{2}(\widetilde{\mathcal{X}}_{0}\cup R, \Z)$ is not generated by algebraic cycles.
\end{rem}

A small modification of the above arguments yields a generalization of Theorem \ref{t'} to higher dimensions:
\begin{thm}
For a given positive integer $n$,
there exists a map of vector bundles on $\P^{1}\times \P^{2n}\times \P^{2n}$
\[
\mathcal{O}^{\oplus (2n+1)}\rightarrow \mathcal{O}(1,2,0)\oplus \mathcal{O}(1,0,2)
\]
defined over $\Q$ such that the rank one degeneracy locus $\mathcal{X}$ is a smooth $(2n+1)$-fold
with a fibration over $\P^{1}$ whose general fibers are $2n$-folds $X$ with $H^{i}(X,\mathcal{O}_{X})=0$ for all $i>0$ and universal Calabi-Yau double covers $Y\rightarrow X$
such that
\begin{enumerate}
\item[(i)] $H^{i}(\mathcal{X},\mathcal{O}_{\mathcal{X}})=0$ for all $i>0$;
\item[(ii)] $\kappa(\mathcal{X})=1$;
\item[(iii)] $\mathcal{X}$ is simply connected, and the cohomology group $H^{3}(\mathcal{X},\Z)$ is torsion-free;
\item[(iv)] the inclusion $H_{2,\alg}(\mathcal{X},\Z) \subsetneq Hdg_{2}(\mathcal{X},\Z)$ is strict.
\end{enumerate}
\end{thm}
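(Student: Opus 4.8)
The plan is to carry out the arguments of Sections~2 and~3 essentially verbatim, replacing each factor $\P^2$ by $\P^{2n}$ and the three sections by $2n+1$ sections; only the numerology changes. Concretely, I would take $M$ to be the $2\times(2n+1)$ matrix whose first row consists of general forms $A_1,\dots,A_{2n+1}$ of tridegree $(1,2,0)$ defined over $\Q$, and whose $k$-th second-row entry is $SB_k+pC_k$ with $B_k$ of tridegree $(0,0,2)$ and $C_k$ of tridegree $(1,0,2)$, for a large prime $p$; the rank one degeneracy locus $\mathcal X$ is cut out by the $2\times 2$ minors. By the Bertini theorem for degeneracy loci it is smooth of the expected dimension $(4n+1)-(2n+1-1)(2-1)=2n+1$, and its general fibre over $\P^1$ is the rank one degeneracy locus of $\O^{\oplus(2n+1)}\to\O(2,0)\oplus\O(0,2)$ on $\P^{2n}\times\P^{2n}$. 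Exactly as in Lemma~\ref{lES}, the associated double cover $Y$ of such a fibre is a smooth complete intersection of $2n+1$ quadrics in $\P^{4n+1}$; since its canonical class is $\O\bigl(-(4n+2)+2(2n+1)\bigr)=\O$, it is a simply connected Calabi--Yau $2n$-fold, and the \'etale quotient $Y\to X$ realizes $Y$ as the universal Calabi--Yau cover. In particular $\omega_X$ is $2$-torsion and $H^i(X,\O_X)=H^i(Y,\O_Y)^\iota=0$ for all $i>0$, the only delicate point being that $\iota$ acts by $-1$ on $H^{2n}(Y,\O_Y)$.

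For (i) I would write down the Eagon--Northcott (Buchsbaum--Rim) resolution of $\mathcal I_{\mathcal X}$ generalizing the three-term complex of Lemma~\ref{l1}, and read off $H^i(\mathcal X,\O_{\mathcal X})=0$ from Bott vanishing on $\P^1\times\P^{2n}\times\P^{2n}$. For (ii), the same double-cover bookkeeping gives the canonical bundle formula $2K_{\mathcal X}=(2n-1)\bigl(2X+\sum_{i,j}E_{i,j}\bigr)$ (which recovers the formula for $n=1$), where $X$ is the fibre class and the $E_{i,j}\cong\P^{2n}$ are the finitely many disjoint components of $\mathcal X\cap(\P^1\times E_i)$, each carrying negative normal bundle $\O_{\P^{2n}}(-2)$; since the coefficient $2n-1$ of the moving part $X$ is positive and the $E_{i,j}$ are rigid, $\kappa(\mathcal X)=1$. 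For (iii), I would reproduce the proof of Lemma~\ref{l3}: the van~Kampen/pushout argument for $\pi_1$ goes through because the local model at each blown-up point is $(\C^{2n+1}-0)/\pm$, which is simply connected upstairs and has $\pi_1=\Z/2$ downstairs, and resolving the cyclic quotient singularity preserves $\pi_1$; and torsion-freeness of $H^3(\mathcal X,\Z)$ follows from the Cartan--Leray spectral sequence for $\mathcal Y^\circ\to\mathcal X^\circ$, using $H^3_{E_{i,j}}(\mathcal X,\Z)=H_{4n-1}(\P^{2n},\Z)=0$ and the vanishing of $H^{\mathrm{odd}}(\Z/2,-)$ applied to the torsion-free, $\iota$-invariant groups $H^{\le 2}(\mathcal Y_{\min},\Z)$.

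The heart of the argument is (iv). I would first note that the Hodge structure on $H^{4n}(\mathcal X,\Z)=H_2(\mathcal X,\Z)$ is trivial: its only possible Hodge types are $(2n-1,2n+1)$, $(2n,2n)$, $(2n+1,2n-1)$, and $h^{2n+1,2n-1}(\mathcal X)=h^{2n-1}(\mathcal X,\omega_{\mathcal X})=h^2(\mathcal X,\O_{\mathcal X})=0$ by Serre duality and (i); hence $Hdg_2(\mathcal X,\Z)=H_2(\mathcal X,\Z)$ and it suffices to exhibit a single non-algebraic integral class. As in Theorem~\ref{t'}, I would establish the congruence $\deg(\alpha/\P^1)\equiv\alpha\cdot\bigl(\sum_jE_{1,j}\bigr)\pmod 2$ for every algebraic $1$-cycle $\alpha$ by specializing $\mathcal X_{\overline\Q}$ over a valuation ring containing $p$: the fibre over $\overline\F_p$ breaks into the smooth pencil $\widetilde{\mathcal X}_0$ cut out by the minors of the specialized matrix $N$ and the component $S=F=0$, and on $\widetilde{\mathcal X}_0$ the divisor identity $D_1=D_2+\sum_jE_{1,j}$ together with the tridegrees $(1,2,0)$ and $(0,0,2)$ yields the congruence after discarding even coefficients mod $2$, the specialization homomorphism then transferring it to characteristic zero. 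Finally, I would produce the offending class as the push-forward under $\mathcal Y\to\mathcal X$ of the pullback to $\mathcal Y$ of a class $\gamma_{\min}\in H_2(\mathcal Y_{\min},\Z)$ mapping to a generator of $H_2(\P^1,\Z)$ (which exists by Lefschetz); this $\beta$ satisfies $\deg(\beta/\P^1)=\pm1$ and $\beta\cdot\bigl(\sum_jE_{1,j}\bigr)=0$, contradicting the congruence and hence is non-algebraic.

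The main obstacle is not conceptual but a matter of verifying that the numerology survives in dimension $2n$: above all the divisor identity $D_1=D_2+\sum_jE_{1,j}$ on $\widetilde{\mathcal X}_0$ and the fact that the components $E_{1,j}$ remain disjoint copies of $\P^{2n}$, together with the smoothness of $\widetilde{\mathcal X}_0$ for general $\Q$-forms. One must also confirm that the fibres genuinely possess universal Calabi--Yau double covers (the higher-dimensional analogue of the Enriques/K3 relationship) and that the Lefschetz and Cartan--Leray inputs behave exactly as in the surface-fibred case, which they do since the needed vanishing---$H_{\mathrm{odd}}(\P^{2n},\Z)=0$ and $\pi_1\bigl((\C^{2n+1}-0)/\pm\bigr)=\Z/2$---holds for all $n\ge1$.
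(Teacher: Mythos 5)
The paper gives no written proof of this theorem beyond asserting that it follows from a small modification of the arguments for $n=1$, and your proposal is exactly that modification, correctly executed: the dimension counts, the canonical bundle formula $2K_{\mathcal X}=(2n-1)\bigl(2X+\sum_{i,j}E_{i,j}\bigr)$, the anti-invariance of the holomorphic volume form under $\iota$ (as $\det\iota=(-1)^{2n+1}=-1$ on $\P^{4n+1}$), and the mod-$2$ specialization argument all go through. The only cosmetic slip is that for $n\ge 2$ the second component of the special fibre is cut out inside $\{S=0\}$ by all the $3\times 3$ minors of the stacked matrix $(A;B;C)$ rather than by a single determinant $F$; this is harmless, since one only needs that irreducible curves on that component have degree zero over $\P^1$ and are disjoint from the $E_{1,j}$.
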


\section{Application to the universality of the Abel-Jacobi maps}
Let $V$ be a smooth complex projective variety. 
For an integer $p$, we let $A^{p}(V)\subseteq CH^p(V)$ denote the subgroup of cycles algebraically equivalent to zero. 
We recall that a homomorphism $\phi\colon A^{p}(V)\rightarrow A$ to an abelian variety $A$ is called {\it regular} 
if for any smooth connected projective variety $S$ with a base point $s_{0}$ and for any codimension $p$ cycle $\Gamma$ on $S\times V$,
the composition 
\[S\rightarrow A^{p}(V) \rightarrow A, s\mapsto \phi(\Gamma_{*}(s-s_{0}))\]
is a morphism of algebraic varieties (this definition goes back to the work of Samuel \cite{Sa}).
An important example of such homomorphisms is the following.
We consider the Abel-Jacobi map
\[AJ^{p}\colon CH^{p}(V)_{\hom}\rightarrow J^{p}(V),\]
where $CH^{p}(V)_{\hom}\subset CH^{p}(V)$ is the subgroup of cycle classes homologous to zero,
and $$J^{p}(V)= H^{2p-1}(V, \C)/(H^{2p-1}(V,\Z(p)) + F^{p}H^{2p-1}(V, \C))$$ is the $p$-th Griffiths intermediate Jacobian
(see \cite[Section 12]{V0} for the definition and properties of the Abel-Jacobi maps).
Then the image $J^{p}_{a}(V)\subset J^{p}(V)$ of the restriction of the Abel-Jacobi map $AJ^{p}$ to $A^{p}(V)$ is an abelian variety, and the induced map
\[
\psi^{p}\colon A^{p}(V)\rightarrow J^{p}_{a}(V),
\]
which we also call Abel-Jacobi,
is regular \cite{Gr}\cite{Lie}.
A classical question of Murre \cite[Section 7]{M3}\cite[p. 132]{GMV} asks whether the Abel-Jacobi map $\psi^{p} \colon A^{p}(V)\rightarrow J^{p}_{a}(V)$ is {\it universal} among all regular homomorphisms $\phi\colon A^{p}(V)\rightarrow A$,
that is, whether every such $\phi$ factors through $\psi^{p}$.
This is known to hold for $p=1$ by the theory of the Picard variety, for $p=\dim V$ by the theory of the Albanese variety, and for $p=2$ as proved by Murre \cite{M1}\cite{M2} using the Merkurjev-Suslin theorem \cite{MS}.

Meanwhile, it was proved by Walker \cite{W} that the Abel-Jacobi map $\psi^{p}$ factors as
\[
\xymatrix{
 &J(N^{p-1}H^{2p-1}(V, \Z(p)))\ar[d]^-{\pi^{p}}\\
 A^{p}(V)\ar[ur]^-{\widetilde{\psi}^{p}}\ar[r]_-{\psi^{p}}& J^{p}_{a}(V)\\
},
\]
where 
$J(N^{p-1}H^{2p-1}(V, \Z(p)))$ is the intermediate Jacobian for the mixed Hodge structure given by the coniveau filtration $N^{p-1}H^{2p-1}(V, \Z(p))$ \cite{BlO},
$\pi^{p}$ is a natural isogeny,
and $\widetilde{\psi}^{p}$ is a surjective regular homomorphism.
If the Abel-Jacobi map $\psi^{p}$ is universal, then the kernel
\[\Ker(\pi^{p})= \Coker\left(H^{2p-1}(V,\Z(p))_{\tors}\rightarrow (H^{2p-1}(V,\Z(p))/N^{p-1}H^{2p-1}(V,\Z(p)))_{\tors}\right)\]
is trivial.
In other words, the sublattice
\[
N^{p-1}H^{2p-1}(V,\Z(p))/\tors\subset H^{2p-1}(V,\Z(p))/\tors
\] 
is primitive.

We recall the main theorem of the paper \cite{S}.

\begin{thm}[\cite{S}, Theorem 1.3]\label{tS}
Let $W$ be a smooth projective variety such that $CH_{0}(W)$ is supported on a surface and the inclusion
\[
H^{4}_{\alg}(W, \Z)/\tors\subsetneq Hdg^{4}(W,\Z)/\tors
\]
is strict.
Then there exists an elliptic curve $E$ such that the sublattice
\[
N^{2}H^{5}(W\times E, \Z(3))/\tors \subset H^{5}(W\times E, \Z(3))/\tors
\]
is not primitive.
\end{thm}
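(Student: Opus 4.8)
The plan is to exhibit an explicit integral class in $H^{5}(W\times E,\Z)$ that lies in the geometric coniveau $N^{2}$ only after multiplication by an integer $\geq 2$. First I would record two consequences of the hypothesis that $CH_{0}(W)$ is supported on a surface $S$. Since a rational equivalence on $W$ pulls back to $W\times\{e\}$, the group $CH_{0}(W\times E)$ is supported on the threefold $S\times E$; hence by the Bloch--Srinivas decomposition of the diagonal \cite{BS} every class in $H^{5}(W\times E,\Q)$ has coniveau $\geq 2$, that is $H^{5}(W\times E,\Q)=N^{2}H^{5}(W\times E,\Q)$. Applying the same input to $W$ gives $H^{4}(W,\Q)=H^{4}_{\alg}(W,\Q)$, so the strict inclusion forces the existence of $\alpha\in Hdg^{4}(W,\Z)$ with $\alpha\notin H^{4}_{\alg}(W,\Z)+\tors$ but $m\alpha\in H^{4}_{\alg}(W,\Z)$ for some integer $m\geq 2$. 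As the Tate twist $\Z(3)$ does not affect the underlying lattice, it then suffices to show that $N^{2}H^{5}(W\times E,\Z)$ is a proper subgroup of $H^{5}(W\times E,\Z)$ modulo torsion; combined with the rational equality above this is exactly the failure of primitivity.

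Next I would fix a generator $\eta\in H^{1}(E,\Z)$ and take $x=[\alpha\otimes\eta]$ in $H^{5}(W\times E,\Z)/\tors$. Writing $m\alpha=\sum_{i}[C_{i}]$ with each $C_{i}\subset W$ of codimension two, the class $[C_{i}]\otimes\eta$ is the Gysin pushforward of $1\otimes\eta\in H^{1}(\widetilde{C_{i}}\times E)$ along the codimension-two inclusion $\widetilde{C_{i}}\times E\hookrightarrow W\times E$, so $mx\in N^{2}H^{5}(W\times E,\Z)$. Thus $x$ represents a nonzero $m$-torsion element of the quotient $\bigl(H^{5}(W\times E,\Z)/\tors\bigr)/N^{2}$ as soon as one knows $x\notin N^{2}$ modulo torsion, which is the only remaining point.

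To detect $x$, I would choose $\eta^{\vee}\in H^{1}(E,\Z)$ with $\langle\eta,\eta^{\vee}\rangle=1$ and define the transfer $T\colon H^{5}(W\times E,\Z)\to H^{4}(W,\Z)$ by $T(z)=p_{W*}(z\cup p_{E}^{*}\eta^{\vee})$. A direct computation gives $T(x)=\alpha$, and $T$ sends torsion to torsion. The group $N^{2}$ is generated by classes $\gamma_{*}u$ with $\gamma\colon \tilde Z\to W\times E$ generically injective onto a codimension-two subvariety and $u\in H^{1}(\tilde Z)$, and the projection formula yields $T(\gamma_{*}u)=f_{*}(u\cup g^{*}\eta^{\vee})$ with $f=p_{W}\gamma$ and $g=p_{E}\gamma$. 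The subvarieties contracted by $p_{W}$ to codimension $\geq 2$ contribute algebraic classes or zero, so $T$ carries $N^{2}$ into classes of the form $\nu_{*}w$, where $w\in H^{2}(\tilde D,\Z)$ and $\nu\colon \tilde D\to W$ resolves a divisor $D\subset W$.

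The crux is then the following. If $x\in N^{2}$ modulo torsion, applying $T$ gives $\alpha=\nu_{*}w+(\text{algebraic})+\tors$, where $\nu_{*}w$ is a Hodge class supported on the divisor $D$; the aim is to conclude $\nu_{*}w\in H^{4}_{\alg}(W,\Z)+\tors$ by the integral Lefschetz $(1,1)$ theorem, since an integral $(1,1)$-class on $\tilde D$ is algebraic and so is its pushforward. The hard part will be the passage to the integral level: the class $w$ is a priori of mixed Hodge type, and although the total sum is a Hodge class, replacing $w$ by an algebraic class with the same pushforward is exactly the failure of integral strictness of $\nu_{*}$, i.e. the gap between the geometric coniveau $N^{2}$ and the Hodge coniveau that the statement is designed to exploit. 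I expect this to be resolved by choosing $E$ sufficiently general, so that $H^{1}(E)$ shares no nontrivial sub-Hodge-structure with the cohomology of the divisors arising above; this should force the transcendental contributions to vanish on Hodge classes, leaving $\alpha$ algebraic modulo torsion and contradicting the choice of $\alpha$.
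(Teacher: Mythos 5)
A preliminary remark: the paper does not prove this statement at all --- Theorem \ref{tS} is imported verbatim from \cite{S} (Theorem 1.3) --- so there is no internal proof to measure your attempt against; I assess it on its own terms. Your opening reductions are the natural ones and are surely the starting point of any proof: Bloch--Srinivas \cite{BS} gives $Hdg^4(W,\Q)=H^4_{\alg}(W,\Q)$ (note it is only the Hodge classes, not all of $H^4(W,\Q)$, that become algebraic), whence a class $\alpha\in Hdg^4(W,\Z)$ with $\alpha\notin H^4_{\alg}(W,\Z)+\tors$ but $m\alpha=\sum_i[C_i]$ algebraic for some $m\geq 2$; and $m(\alpha\otimes\eta)$ does lie in $N^2H^5(W\times E,\Z)$, being supported on $\left(\bigcup_iC_i\right)\times E$, which has codimension two.

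The gap is at the step you yourself flag, and the proposed fix does not close it. Your transfer $T=p_{W*}\left(-\cup p_E^*\eta^{\vee}\right)$ does carry $N^2H^5(W\times E,\Z)$ into $N^1H^4(W,\Z)$: a class supported on a codimension-two $V\subset W\times E$ pushes forward to a class supported on $\overline{p_W(V)}$, which is at most a divisor. But this containment is vacuous precisely in the case to which the theorem is applied in this paper: for a threefold $W$ and a smooth very ample surface $D\subset W$, the Gysin map $H^2(D,\Z)\to H^4(W,\Z)$ is surjective by the Lefschetz hyperplane theorem, so $N^1H^4(W,\Z)=H^4(W,\Z)\ni\alpha$. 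The statement you then need --- that an integral Hodge class of the form $\nu_*w$ with $w\in H^2(\widetilde{D},\Z)$ is algebraic modulo torsion --- is, for threefolds, literally the integral Hodge conjecture modulo torsion for $W$, i.e.\ the very thing being contradicted; the semisimplicity argument only lets you replace $w$ by a Hodge class with the same pushforward after tensoring with $\Q$, which recovers nothing beyond the known algebraicity of $m\alpha$. Choosing $E$ very general cannot repair this: once $T$ has been applied, the curve has disappeared from the problem, and in any case one cannot arrange that no codimension-two $V\subset W\times E$ admits $E$ as an isogeny factor of $\Alb(\widetilde{V})$ --- the subvarieties $C_i\times E$ always do, and must, since $T(m\alpha\otimes\eta)=\pm m\alpha$ is non-torsion. (A secondary issue: identifying $N^2H^5(\cdot,\Z)$ with sums of Gysin images $\gamma_*H^1(\widetilde{Z},\Z)$ from resolutions is automatic only with $\Q$-coefficients.) A workable proof has to control the integral coniveau quotient directly; the argument in \cite{S} does this via the Bloch--Ogus coniveau spectral sequence and the Colliot-Th\'el\`ene--Voisin description of the torsion of $Hdg^4(W,\Z)/H^4_{\alg}(W,\Z)$ by unramified cohomology, rather than by transferring back to $W$.
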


Now we prove that the Abel-Jacobi map is not universal in general.
This settles Murre's question.

\begin{cor}\label{c'}
Let $\mathcal{X}$ be the pencil of Enriques surfaces of Theorem \ref{t'}.
Then there exists an elliptic curve $E$ such that 
the Abel-Jacobi map
\[
\psi^{3}\colon A^{3}(\mathcal{X}\times E)\rightarrow J_{a}^{3}(\mathcal{X}\times E)
\]
is not universal: 
it factors as
\[
\xymatrix{
 &J(N^{2}H^{5}(\mathcal{X}\times E, \Z(3)))\ar[d]^-{\pi^{3}}\\
 A^{3}(\mathcal{X}\times E)\ar[ur]^-{\widetilde{\psi}^{3}}\ar[r]_-{\psi^{3}}& J^{3}_{a}(\mathcal{X}\times E)\\
},
\]
where 
the Walker map $\widetilde{\psi}^{3}$ is surjective regular, and 
the natural isogeny $\pi^{3}$ has non-zero kernel, or equivalently, 
the sublattice 
\[
N^{2}H^{5}(\mathcal{X}\times E,\Z(3))\subset H^{5}(\mathcal{X}\times E,\Z(3))
\] 
is not primitive.
\end{cor}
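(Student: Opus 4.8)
The plan is to deduce Corollary \ref{c'} directly from Theorem \ref{tS}, applied to $W=\mathcal{X}$. To do this I must verify that $\mathcal{X}$ satisfies the two hypotheses of Theorem \ref{tS}. First, the condition that $CH_{0}(\mathcal{X})$ be supported on a surface is immediate: by Lemma \ref{l2} the degree map $CH_{0}(\mathcal{X})\to\Z$ is an isomorphism, so $CH_{0}(\mathcal{X})$ is in fact supported on a point, a fortiori on a surface. Second, I need the strict inclusion $H^{4}_{\alg}(\mathcal{X},\Z)/\tors\subsetneq Hdg^{4}(\mathcal{X},\Z)/\tors$. Theorem \ref{t'} gives the strict inclusion $H^{4}_{\alg}(\mathcal{X},\Z)\subsetneq Hdg^{4}(\mathcal{X},\Z)$ on the nose, and by Lemma \ref{l3} the cohomology groups $H^{i}(\mathcal{X},\Z)$ are torsion-free. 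Since there is no torsion to quotient out, the strictness descends to the torsion-free quotients verbatim.

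With both hypotheses in place, Theorem \ref{tS} produces an elliptic curve $E$ for which the sublattice $N^{2}H^{5}(\mathcal{X}\times E,\Z(3))/\tors\subset H^{5}(\mathcal{X}\times E,\Z(3))/\tors$ is not primitive, which is the non-primitivity assertion of the corollary. It then remains to translate this lattice-theoretic statement into the statement about the Abel-Jacobi map. For this I invoke the Walker factorization recalled in the discussion preceding Theorem \ref{tS}: the map $\psi^{3}$ factors through $\widetilde{\psi}^{3}$ (surjective regular) followed by the natural isogeny $\pi^{3}$, whose kernel is identified with $\Coker\bigl(H^{5}(\mathcal{X}\times E,\Z(3))_{\tors}\to (H^{5}/N^{2}H^{5})_{\tors}\bigr)$ and is trivial precisely when the sublattice $N^{2}H^{5}/\tors$ is primitive. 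Non-primitivity therefore forces $\Ker(\pi^{3})\neq 0$, so $\pi^{3}$ is a non-trivial isogeny and no factorization of $\widetilde{\psi}^{3}$ through $\psi^{3}$ exists; hence $\psi^{3}$ fails to be universal.

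There is no genuine obstacle here, since the substantive work has already been carried out in Theorem \ref{t'} (the existence of the non-algebraic Hodge class), in Lemma \ref{l2} (triviality of $CH_{0}$), and in the external input Theorem \ref{tS} of \cite{S}. The only point requiring a moment of care is the bookkeeping with torsion: one must confirm that the torsion-freeness of $H^{*}(\mathcal{X},\Z)$ from Lemma \ref{l3}, together with the K\"unneth formula for $\mathcal{X}\times E$, is compatible with the identification of $\Ker(\pi^{3})$ as the cokernel appearing above, so that \emph{non-primitivity of the sublattice} and \emph{non-triviality of the isogeny kernel} are genuinely equivalent in this instance and not merely related by an inclusion. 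Once that equivalence is recorded, the corollary follows formally by combining Theorem \ref{t'} with Theorem \ref{tS}.
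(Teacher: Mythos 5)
Your proposal is correct and follows essentially the same route as the paper: verify the two hypotheses of Theorem \ref{tS} via Lemma \ref{l2} and Theorem \ref{t'} (using the torsion-freeness from Lemma \ref{l3} to pass to the quotients by torsion), apply Theorem \ref{tS} with $W=\mathcal{X}$, and translate non-primitivity into non-universality through the Walker factorization. Your extra remarks on the torsion bookkeeping are sound but add nothing beyond what the paper's own two-line proof already relies on.
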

\begin{rem}
The Walker map $\widetilde{\psi}^{3}$ in the statement is universal by \cite[Theorem 1.1]{S}.
\end{rem}
\begin{rem} 
In fact, we have $N^{2}H^{5}(\mathcal{X}\times E, \Q)=H^{5}(\mathcal{X}\times E, \Q)$ as a consequence of decomposition of the diagonal \cite{BS}.
In other words, $J^{3}_{a}(\mathcal{X}\times E)=J^{3}(\mathcal{X}\times E)$ (see \cite[Lemma 4.3]{M3}).
\end{rem}
\begin{proof}[Proof of Corollary \ref{c'}]
We have $CH_{0}(\mathcal{X})=\Z$ by Lemma \ref{l2}.
Moreover, 
the cohomology group $H^{4}(\mathcal{X},\Z)$ is torsion-free
and 
the inclusion $H^{4}_{\alg}(\mathcal{X},\Z)\subsetneq Hdg^{4}(\mathcal{X},\Z)$ is strict by Theorem \ref{t'}.
Now the assertion follows by applying Theorem \ref{tS} to $W=\mathcal{X}$.
The proof is complete.
\end{proof}

Finally, we explain how to produce counterexamples to Murre's question in higher dimensions and for other values of $p$.
We take $\mathcal{X}$ and $E$ as in Corollary \ref{c'}, and let $d\geq 4$.
Then, on the $d$-fold $\mathcal{X}\times E\times \P^{d-4}$, for all $3\leq p\leq d-1$, the sublattice
\[N^{p-1}H^{2p-1}(\mathcal{X}\times E\times \P^{d-4}, \Z(p))\subset H^{2p-1}(\mathcal{X}\times E\times \P^{d-4},\Z(p))\]
is not primitive
(this follows from the formula \cite[Theorem 3.1]{Ba} for the Bloch-Ogus spectral sequence \cite{BlO} under taking the product with a projective space).
In particular, for all $3\leq p\leq d-1$, the Abel-Jacobi map
\[\psi^{p}\colon A^{p}(\mathcal{X}\times E\times \P^{d-4})\rightarrow J^{p}_{a}(\mathcal{X}\times E\times \P^{d-4})\]
is not universal.

\end{document}